\documentclass[12pt, reqno]{article}
\usepackage{amssymb,amsthm,amsmath,cite}
\usepackage[colorlinks=true]{hyperref}
 \hypersetup{urlcolor=blue, citecolor=red}
\usepackage{mathrsfs}
\usepackage{float}
\usepackage{array}
\usepackage{latexsym,bbm,bbding,dsfont,fancyhdr,enumerate}

\pagestyle{plain}
\marginparwidth    0pt
\oddsidemargin     0pt
\evensidemargin    0pt
\topmargin         -10pt
\textheight        23cm
\textwidth         16cm

\numberwithin{equation}{section}

\newtheorem{lemma}{Lemma}[section]
\newtheorem{theorem}{Theorem}

\theoremstyle{definition}

\newtheorem*{notation}{Notation}

\numberwithin{equation}{section}

%---------------------------------------

%-------------------------------------------------

\begin{document}

\centerline{\Large\bf Goldbach-Linnik type problems involving one prime,}
\centerline{\Large\bf four prime cubes and powers of 2}

\bigskip\bigskip
\centerline{Xue Han and Huafeng Liu}

\centerline{School of Mathematics and Statistics, Shandong Normal University}
\centerline{ Jinan 250358, Shandong, China}
\centerline{Email: {\tt  han\_xue@stu.sdnu.edu.cn; huafengliu@sdnu.edu.cn}}

\bigskip\bigskip

\date{}

{\bf Abstract.} In this paper, we prove that every pair of sufficiently large odd integers can be represented in the form of a pair of one prime, four prime cubes and $48$ powers of $2$.

\medskip
{\bf Keywords and phrases.} Goldbach-Linnik type problem, the Hardy-Littlewood method, powers of $2$

  \medskip
{\bf Mathematics Subject Classification (2020).}  11P05, 11P32, 11P55

\section{Introduction}\label{sec.Introd}

In 1950s, Linnik \cite{Linnik51,Linnik53} approximated the even Goldbach conjecture in a different way by proving that every sufficiently large even integer $N$ can be represented in the form of two primes and $k_{1}$ powers of 2, namely
\begin{equation}\label{1.3}
  N=p_{1}+p_{2}+2^{v_{1}}+2^{v_{2}}+\cdots+2^{v_{k_{1}}}.
\end{equation}
It can be easily deduced that \eqref{1.3} with $k_1=0$ is equivalent to the even Goldbach conjecture. In 1998, Liu, Liu and Wang \cite{LLW} first got the explicit value of $k_{1}$ and showed that $k_{1}=54000$ is admissible in \eqref{1.3}. Later, the value of $k_{1}$ was improved by many scholars. Up to now the best result is $k_{1}=8$ established by Pintz and Ruzsa \cite{PR2020}.

In 2001, Liu and Liu \cite{LL2001} proved that every sufficiently large even integer $N$ can be written as a sum of eight prime cubes and $k_{2}$ powers of $2$, namely
\begin{equation}\label{1.4}
N=p_{1}^{3}+p_{2}^{3}+\cdots+p_{8}^{3}+2^{v_{1}}+2^{v_{2}}+\cdots+2^{v_{k_{2}}}.
\end{equation}
In 2010, Liu and L\"{u} \cite{LL2010} proved that $k_{2}=358$ is admissible in \eqref{1.4}. Subsequently, the value of $k_{2}$ was improved by many scholars. So far the best result is $k_{2}=28$ established by the authors \cite{HLDM}.

As a hybrid problem of \eqref{1.3} and \eqref{1.4}, Liu and L\"{u} \cite{LL2011JNT} proved that every  sufficiently large odd integer $N$ can be written as a sum of one prime, four prime cubes and $k_{3}=106$ powers of $2$, namely
\begin{equation}\label{13333}
N=p_{1}+p_{2}^{3}+p_{3}^{3}+p_{4}^{3}+p_{5}^{3}+2^{v_{1}}+2^{v_{2}}+\cdots+2^{v_{k_{3}}}.
\end{equation}
Recently, Ching and Tsang \cite{CT} proved that $k_{3}=15$ is admissible.

In analytic number theory, when we have a method of handling a representation problem such
as \eqref{13333} described above, it is meaningful and interesting to examine to what
extent the method can be applied to the pairs of corresponding forms. For example, Liu and Tsang \cite{liutsang1989} studied the simultaneous representation of pairs of linear equations in three prime variables. In this paper, we consider the simultaneous representation of pairs of sufficiently large odd integers $N_{1}$ and $N_{2}$ satisfying $ N_{1} \asymp N_{2}$ in the form
\begin{equation}\label{main}
\left\{\begin{array}{l}
N_{1}=p_{1}+p_{2}^{3}+p_{3}^{3}+p_{4}^{3}+p_{5}^{3}+2^{v_{1}}+2^{v_{2}}+\cdots +2^{v_{k}},  \\
N_{2}=p_{6}+p_{7}^{3}+p_{8}^{3}+p_{9}^{3}+p_{10}^{3}+2^{v_{1}}+2^{v_{2}}+\cdots +2^{v_{k}},
\end{array}\right.
\end{equation}
where $k$ is a positive integer. We shall give the acceptable value of $k$ such that the equations \eqref{main} are solvable in the following theorem.

\begin{theorem}\label{thm1}
 The equations \eqref{main} with $k=48$ are solvable for every pair of sufficiently large odd integers $N_{1}$ and $N_{2}$ with $N_{1} \asymp N_{2}$.
\end{theorem}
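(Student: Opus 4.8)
The plan is to apply the Hardy--Littlewood circle method in two dimensions, attaching dual variables $\alpha,\beta$ to the two equations in \eqref{main}. Put $N=\max(N_1,N_2)$ (so that $N_1\asymp N_2\asymp N$), let $L=\lfloor \log N/\log 2\rfloor$, and introduce the weighted exponential sums
\[
S(\alpha)=\sum_{p\le N_1}(\log p)\,e(p\alpha),\qquad T(\alpha)=\sum_{p\le N_1^{1/3}}(\log p)\,e(p^3\alpha),
\]
together with their $\beta$-analogues $S^{*}(\beta),T^{*}(\beta)$ formed with $N_2$, and the powers-of-two sum $G(\alpha,\beta)=\sum_{1\le v\le L}e\bigl(2^{v}(\alpha+\beta)\bigr)$. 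Since the same exponents $v_1,\dots,v_k$ occur in both equations, $G$ depends only on $\gamma=\alpha+\beta$, which is the essential structural feature of the pair problem. The logarithmically weighted number of representations is then
\[
R(N_1,N_2)=\int_0^1\!\!\int_0^1 S(\alpha)T(\alpha)^4\,S^{*}(\beta)T^{*}(\beta)^4\,G(\alpha,\beta)^{k}\,e(-N_1\alpha-N_2\beta)\,d\alpha\,d\beta,
\]
and it suffices to prove $R(N_1,N_2)>0$.

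First I would fix parameters $P=N^{\delta}$ and $Q=N^{1-\delta}$ for a small $\delta>0$, take Farey-type major arcs $\mathfrak M$ (near $a/q$ with $q\le P$) and minor arcs $\mathfrak m$ in each variable, and split the unit square as $(\mathfrak M\times\mathfrak M)\cup(\mathfrak M\times\mathfrak m)\cup(\mathfrak m\times\mathfrak M)\cup(\mathfrak m\times\mathfrak m)$. On $\mathfrak M\times\mathfrak M$ I would replace each prime sum by its standard major-arc approximation; because the coupling enters only through the $2$-adic local factor coming from $G$, the main term essentially factors across the two equations and evaluates to a positive quantity of order $\mathfrak S\,L^{k}N^{8/3}$, with a nonnegative singular integral and singular series $\mathfrak S$. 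The decisive point is to show $\mathfrak S\gg 1$: the hypothesis that $N_1,N_2$ are odd, combined with the $k$ powers of $2$, guarantees solvability at the prime $2$, while the classical local densities for one prime plus four prime cubes are positive at each odd prime. This gives $R(\mathfrak M\times\mathfrak M)\gg \mathfrak S\,L^{k}N^{8/3}$.

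The heart of the argument is to bound the remaining three regions by $o\bigl(L^{k}N^{8/3}\bigr)$, and this is where the value $k=48$ is forced. The two ingredients are (i) moment and pointwise minor-arc estimates for the cubic prime Weyl sum $T$ (of Hua-- and Vinogradov--Weyl type) and for the linear prime sum $S$, and (ii) an explicit measure estimate for the powers-of-two sum, of Linnik and Pintz--Ruzsa type, providing a decreasing function $E(\lambda)$ with $\operatorname{meas}\{\gamma:\ |G(\gamma)|\ge \lambda L\}\ll N^{-E(\lambda)}$ alongside the elementary identity $\int_0^1|G(\gamma)|^2\,d\gamma=L$. On $\mathfrak m\times\mathfrak m$ I would decouple the two variables by Cauchy--Schwarz and H\"older, estimate each factor $S\,T^4$ (respectively $S^{*}T^{*4}$) by its minor-arc mean value, and integrate $|G(\gamma)|^{k}$ over $\gamma=\alpha+\beta$ by splitting according to the size of $|G|$ and optimizing the threshold $\lambda$ against $E(\lambda)$; the mixed regions $\mathfrak M\times\mathfrak m$ and $\mathfrak m\times\mathfrak M$ are treated by exploiting the genuine minor-arc smallness on one side and a mean value over the major arcs on the other.

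The main obstacle I anticipate is precisely this minor-arc estimate. Four prime cubes is a small number of variables, so the power saving available from $T^4$ alone is thin and must be combined very efficiently with the savings coming from the powers of $2$; the two-dimensional coupling through $\gamma=\alpha+\beta$ further requires that the $G$-measure estimate be applied uniformly while the $\alpha$- and $\beta$-factors are controlled separately. Making the numerical optimization of $E(\lambda)$ against the deficit from the prime-cube moments close exactly at $k=48$ is the delicate quantitative step, after which one concludes via $R(N_1,N_2)\ge R(\mathfrak M\times\mathfrak M)-\bigl|R(\text{rest})\bigr|\gg \mathfrak S\,L^{k}N^{8/3}>0$.
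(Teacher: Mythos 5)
Your proposal correctly identifies the overall strategy (two-dimensional circle method, coupling through $\gamma=\alpha_1+\alpha_2$, a measure estimate for the set where $|G|\ge\lambda L$), but as written it has two genuine gaps that prevent it from being a proof of the statement with $k=48$. First, you take all four prime cubes of the same size, $T(\alpha)=\sum_{p\le N_1^{1/3}}(\log p)e(p^3\alpha)$, and propose to control the minor arcs through moment estimates for $T$. This fails at the critical point: by Cauchy--Schwarz and Hua's inequality one gets
\begin{equation*}
\int_{\mathfrak m}|S\,T^4|\,\mathrm{d}\alpha\le\Bigl(\int_0^1|S|^2\,\mathrm{d}\alpha\Bigr)^{\frac12}\Bigl(\int_0^1|T|^8\,\mathrm{d}\alpha\Bigr)^{\frac12}\ll N^{\frac12+\epsilon}\cdot N^{\frac56+\epsilon}=N^{\frac43+\epsilon},
\end{equation*}
which is the size of the main term, so there is no power saving to absorb the losses from $G^k$ and the singular series; ``one prime plus four prime cubes'' has too few variables for the naive moment argument. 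The paper circumvents this precisely by putting the four cubes in unequal dyadic ranges $U_i,V_i,W_i,W_i$ with $W_i=U_i^{5/18}$, and then invoking the specialized estimates of Zhao and Br\"udern (Lemmas \ref{lem4.3}--\ref{lemS26}) for the resulting mixed mean values such as $\int_{\mathfrak m_i}|S_3^3(\alpha_i,U_i)S_3^9(\alpha_i,V_i)|\,\mathrm{d}\alpha_i$ and $\int_0^1|S_3^2(\alpha_i,U_i)S_3^6(\alpha_i,W_i)|\,\mathrm{d}\alpha_i$. You anticipate that this is ``the delicate quantitative step,'' but naming the difficulty is not the same as supplying the mechanism that overcomes it.

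Second, nothing in your sketch can produce the specific value $k=48$: the conclusion rests on an explicit numerical inequality of the form $c_1(1-\eta)^8-c_2(1-\eta)^{-1}\lambda^{k-2}>0$, which requires (i) an explicit lower bound for the \emph{average} of $\mathfrak S(n_1)\mathfrak S(n_2)$ over $n_i\in\Xi(N_i,k)$ (the paper's Lemma \ref{lemSn}, a computer-assisted product over primes plus a count of $n_1$ in arithmetic progressions modulo $q=5187$), not merely the qualitative statement $\mathfrak S\gg1$; (ii) explicit constants in the mean values \eqref{SG} and \eqref{SSW224}; and (iii) an explicit admissible pair $(\lambda,E(\lambda))$ with $E(0.87045114)>\frac{13}{18}+10^{-10}$. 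Without these three numerical inputs your argument could at best yield ``some finite $k$ is admissible,'' which is weaker than the theorem. To repair the proposal you would need to adopt the unequal-cube decomposition (or an equivalent device giving a genuine power saving on the minor arcs) and carry out the explicit computations.
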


Result of this type was also considered by Chen \cite{chenxin2022} who proved that $k=231$ is acceptable in \eqref{main}. Our result largely improves Chen's result.

To prove Theorem \ref{thm1}, we mainly apply the Hardy-Littlewood method. The key point is to transfer the weighted number of solutions $\mathrm{R}(N_{1},N_{2})$ (see \eqref{MS}) of \eqref{main} to a positive lower bound of $k$ that satisfies $\mathrm{R}(N_{1},N_{2})>0$. On the major arcs, we establish a detailed numerical estimate of the singular series on average by a meticulous calculation (see Lemma \ref{lemSn}). In order to handle the minor arcs, we prove a new estimate for the integrals involving exponential sums (see Lemma \ref{lemS39}). Also the new value of $\lambda$ in the estimate of the Lebesgue measure of the set $\mathscr{E}_{\lambda}$  in Lemma \ref{lemlambda} plays an important role.

\begin{notation}
Throughout this paper, the letter $p$, with or without subscripts, always represents a prime.
We write $e(x)=e^{2\pi ix}$, and $n\sim N$ means $N<n\leq 2N$.
The letter $\epsilon$ denotes an arbitrarily small positive constant, whose value may not be the same at different occurrences.
\end{notation}

\section{Outline of the proof}\label{sec.sketch}

In this section, we give the outline of the proof of Theorem \ref{thm1} by applying the Hardy-Littlewood method. Throughout this paper, we always take the subscript $i = 1, 2$.
Let
\begin{equation}\label{PQL}
  P_{i}=N_{i}^{\frac{1}{9}-2\epsilon}, \quad Q_{i}=N_{i}^{\frac{8}{9}+\epsilon},\quad L=\frac{\log(N_{1}/\log N_{1})}{\log 2}.
\end{equation}
Then we define the major arcs $\mathfrak{M}_{i}$ and the minor arcs $\mathfrak{m}_{i}$ as
\begin{equation}\label{2.2}
\mathfrak{M}_{i}=\bigcup_{1 \leq q_{i} \leq P_{i}}\bigcup_{\substack{1 \leq a_{i} \leq q_{i} \\ \left(a_{i}, q_{i}\right)=1}} \mathfrak{M}_{i}\left(a_{i}, q_{i}\right),\quad \mathfrak{m}_{i}=[0, 1] \backslash \mathfrak{M}_{i},
\end{equation}
where
\begin{equation*}
\mathfrak{M}_{i}\left(a_{i}, q_{i}\right)=\left\{\alpha_{i} \in[0,1]:\left|\alpha_{i}-\frac{a_{i}}{q_{i}}\right| \leq \frac{1}{q_{i} Q_{i}}\right\}
\end{equation*}
 and
\begin{equation*}
      1\leq a_{i}\leq q_{i}\leq Q_{i}, \ \ (a_{i},q_{i})=1.
\end{equation*}
Further, by a similar argument in Kong and Liu \cite{KL} we define
\begin{equation}\label{M}
\mathfrak{M}=\mathfrak{M}_{1} \times \mathfrak{M}_{2}=\left\{\left(\alpha_{1}, \alpha_{2}\right) \in[0,1]^{2}: \alpha_{1} \in \mathfrak{M}_{1}, \alpha_{2} \in \mathfrak{M}_{2}\right\}
\end{equation}
and
\begin{equation}\label{C(M)}
\mathfrak{m}=[0, 1]^{2} \backslash \mathfrak{M}.
\end{equation}
Let
\begin{equation}\label{UVW1}
U_{i}=\frac{1}{2}\left((1-\eta)N_{i}\right)^{\frac{1}{3}}, \quad V_{i}=\frac{1}{2}(\eta N_{i})^{\frac{1}{3}}, \quad W_{i}=U_{i}^{\frac{5}{18}},
\end{equation}
where $\eta$ is a sufficiently small positive constant. We set
\begin{equation}\label{f11}
  f\left(\alpha_{i},N_{i}\right)=\sum_{p\leq N_{i}}(\log p) e\left(p \alpha_{i}\right),
\end{equation}
\begin{equation}\label{2.4}
S_{3}\left(\alpha_{i},U_{i}\right)=\sum_{p\sim U_{i}}(\log p) e\left(p^{3} \alpha_{i}\right),
\end{equation}
\begin{equation}\label{dS3}
S_{3}\left(\alpha_{i},V_{i}\right)=\sum_{p\sim V_{i}}(\log p) e\left(p^{3} \alpha_{i}\right),
\end{equation}
\begin{equation}\label{dS4}
S_{3}\left(\alpha_{i},W_{i}\right)=\sum_{p\sim W_{i}}(\log p) e\left(p^{3} \alpha_{i}\right),
\end{equation}
\begin{equation*}
G\left(\alpha_{i}\right)=\sum_{4 \leq v \leq L} e\left(2^{v} \alpha_{i}\right), \quad
\mathscr{E}_{\lambda}=\left\{(\alpha_{1},\alpha_{2})\in[0,1]^{2}:|G(\alpha_{1}+\alpha_{2})|\geq \lambda L\right\}.
\end{equation*}

Then we let
\begin{equation}\label{MS}
  \mathrm{R}(N_{1},N_{2})=\sum(\log p_{1})(\log p_{2})\cdots(\log p_{10})
\end{equation}
denote the number of solutions of \eqref{main}, which are weighted by $(\log p_{1})(\log p_{2})\cdots(\log p_{10})$,  in $(p_{1},p_{2},\ldots,p_{10},v_{1},v_{2},\ldots,v_{k})$ such that
\begin{equation*}
\begin{aligned}
&p_{1}\leq N_{1},\quad p_{2}\sim U_{1}, \quad p_{3}\sim V_{1},\quad p_{4},p_{5}\sim W_{1}, \\
&p_{6}\leq N_{2},\quad p_{7}\sim U_{2}, \quad p_{8}\sim V_{2},\quad p_{9},p_{10}\sim W_{2}, \\
&4 \leq v_{1}, v_{2}, \ldots, v_{k} \leq L.
\end{aligned}
\end{equation*}
Thus by the definitions of $\mathfrak{M}, \mathfrak{m}$, $\mathscr{E}_{\lambda}$ and the orthogonality, we can write \eqref{MS} as
\begin{equation}\label{R}
\begin{aligned}
 \mathrm{R}&\left(N_{1}, N_{2}\right) \\
=&\left(\iint\limits_{\mathfrak{M}}+\iint\limits_{\mathfrak{m} \cap \mathscr{E}_{\lambda}}+\iint\limits_{\mathfrak{m} \backslash \mathscr{E}_{\lambda}}\right) f\left(\alpha_{1},N_{1}\right)S_{3}\left(\alpha_{1}, U_{1}\right)S_{3}\left(\alpha_{1}, V_{1}\right)S_{3}^{2}\left(\alpha_{1}, W_{1}\right)
f\left(\alpha_{2},N_{2}\right)\\
&\times S_{3}\left(\alpha_{2},U_{2}\right)S_{3}\left(\alpha_{2}, V_{2}\right)S_{3}^{2}\left(\alpha_{2}, W_{2}\right)
G^{k}\left(\alpha_{1}+\alpha_{2}\right) e\left(-\alpha_{1} N_{1}-\alpha_{2} N_{2}\right)\mathrm{d}\alpha_{1}\mathrm{d}\alpha_{2} \\
:=& \mathrm{R}_{1}\left(N_{1}, N_{2}\right)+ \mathrm{R}_{2}\left(N_{1}, N_{2}\right)+ \mathrm{R}_{3}\left(N_{1}, N_{2}\right).
\end{aligned}
\end{equation}

In the following sections, we shall give the desired estimates for these three terms $ \mathrm{R}_{r}\left(N_{1}, N_{2}\right), r=1,2,3$ at the right side of \eqref{R} and thus determine the acceptable value of $k$.

\section{Auxiliary lemmas}
\label{sec.lemmas}

In this section, we give some auxiliary lemmas which will be used in the proof of Theorem \ref{thm1}.
Let $j$ be a positive integer throughout this section. Let also
\begin{equation}\label{An}
C_{j}(q, a)=\sum_{\substack{m=1 \\(m,q)=1}}^{q} e\left(\frac{a m^{j}}{q}\right),\
A(n, q)=\frac{1}{ \varphi^{5}(q)}\sum_{\substack{a=1\\(a, q)=1}}^{q}C_{1}(q, a) C_{3}^{4}(q, a) e\left(-\frac{a n}{q}\right)
\end{equation}
and
\begin{equation}\label{3.1}
\mathfrak{S}(n)=\sum_{q=1}^{\infty} A(n, q).
\end{equation}
By direct calculation, we have $C_{1}(q, a)=\mu(q)$.

\begin{lemma}\label{lem1}
Let $\mathfrak{M}_{i}$, $f(\alpha_{i},N_{i})$, $S_{3}\left(\alpha_{i},U_{i}\right)$, $S_{3}\left(\alpha_{i},V_{i}\right)$ and $S_{3}\left(\alpha_{i},W_{i}\right)$ be defined as in \eqref{2.2}, \eqref{f11}, \eqref{2.4}, \eqref{dS3} and \eqref{dS4}, respectively. Then for $2\leq n_{i}\leq N_{i}$, we have
\begin{equation*}
\int\limits_{\mathfrak{M}_{i}} f(\alpha_{i},N_{i})S_{3}(\alpha_{i},U_{i}) S_{3}(\alpha_{i},V_{i}) S_{3}^{2}(\alpha_{i},W_{i}) e\left(-n_{i} \alpha_{i}\right) \mathrm{d} \alpha_{i}=\frac{1}{3^{4}} \mathfrak{S}(n_{i})\mathfrak{J}(n_{i})+O\left(U_{i}V_{i}W_{i}^{2} L^{-1}\right),
\end{equation*}
where $\mathfrak{S}\left(n_{i}\right)$ is defined as \eqref{3.1} and satisfies $\mathfrak{S}\left(n_{i}\right)\gg 1$ for $n_{i} \equiv 1 \pmod 2$, and $\mathfrak{J}\left(n_{i}\right)$ is defined as
\begin{equation*}
\mathfrak{J}(n_{i}):=\sum_{\substack{m_{1}+m_{2}+\cdots+m_{5}=n_{i}\\U_{i}^{3}<m_{2}\leq 8U_{i}^{3}\\V_{i}^{3}<m_{3}\leq 8V_{i}^{3}\\W_{i}^{3}<m_{4},m_{5}\leq 8W_{i}^{3}}}\left(m_{2}m_{3}m_{4}m_{5}\right)^{-\frac{2}{3}}
\end{equation*}
and satisfies $U_{i}V_{i}W_{i}^{2}\ll \mathfrak{J}\left(n_{i}\right)\ll U_{i}V_{i}W_{i}^{2}$.
\end{lemma}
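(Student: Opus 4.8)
The plan is to carry out a careful major-arc analysis in the circle method, decomposing each generating function into a local factor times a common oscillatory integral. On a single arc $\mathfrak{M}_i(a_i,q_i)$ I write $\alpha_i=a_i/q_i+\beta_i$ with $|\beta_i|\le 1/(q_iQ_i)$ and seek approximations of the shape
$$f(\alpha_i,N_i)=\frac{\mu(q_i)}{\varphi(q_i)}T(\beta_i)+E_1,\qquad S_3(\alpha_i,X_i)=\frac{C_3(q_i,a_i)}{\varphi(q_i)}I_3(\beta_i,X_i)+E_3,$$
for $X_i\in\{U_i,V_i,W_i\}$, where $T(\beta_i)=\sum_{m\le N_i}e(m\beta_i)$ and $I_3(\beta_i,X_i)=\sum_{X_i<m\le 2X_i}e(m^3\beta_i)$. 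These follow from the prime number theorem in arithmetic progressions together with partial summation, the $(\log p)$-weight flattening the density so the integer sums appear unweighted; here the recorded identity $C_1(q,a)=\mu(q)$ turns the linear factor into $\mu(q_i)/\varphi(q_i)$. Multiplying the five approximations produces the main integrand $\mu(q_i)\varphi(q_i)^{-5}C_3^4(q_i,a_i)\,T(\beta_i)I_3(\beta_i,U_i)I_3(\beta_i,V_i)I_3^2(\beta_i,W_i)$, whose arithmetic part already matches the definition of $A(n_i,q_i)$.

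Next I would assemble the pieces. Summing over $a_i$ coprime to $q_i$ collapses the Gauss-sum product into $A(n_i,q_i)$, and the bulk of the arithmetic factor is recovered by summing over the Siegel--Walfisz range; I then extend to the full series $\mathfrak{S}(n_i)$ at the cost of the tail $\sum_{q_i}|A(n_i,q_i)|$, controlled by the multiplicativity of $A(n_i,\cdot)$ and the Weil/Weyl bound on $C_3(q_i,a_i)$, which gives $|A(n_i,q_i)|\ll q_i^{-1-\delta}$ for some $\delta>0$. For the $\beta_i$-integral I complete the range to $\mathbb{R}$ (the tail being negligible) and replace each integer cube sum $I_3(\beta_i,X_i)$ by $\int_{X_i}^{2X_i}e(t^3\beta_i)\,dt=\tfrac13\int_{X_i^3}^{8X_i^3}e(u\beta_i)u^{-2/3}\,du$; the four cubic factors thereby contribute the constant $3^{-4}$ and the weights $(m_2m_3m_4m_5)^{-2/3}$, and integrating the resulting product against $e(-n_i\beta_i)$ selects the linear relation $m_1+\cdots+m_5=n_i$, yielding exactly $3^{-4}\mathfrak{J}(n_i)$.

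The two size statements are handled separately. For $\mathfrak{S}(n_i)\gg1$ I expand it as an Euler product $\prod_p\sigma_p(n_i)$, bound each local density uniformly away from $0$ and $\infty$ for large $p$, and compute the factor at $p=2$ directly: since $p_1,\ldots,p_5$ are odd their sum is odd, so $\sigma_2(n_i)>0$ precisely when $n_i\equiv 1\pmod 2$, which is the source of the oddness hypothesis. For $U_iV_iW_i^2\ll\mathfrak{J}(n_i)\ll U_iV_iW_i^2$ I treat $\mathfrak{J}(n_i)$ as a Riemann sum for the corresponding volume integral, summing the weights variable by variable: $\sum_{m_2}m_2^{-2/3}\asymp U_i$, $\sum_{m_3}m_3^{-2/3}\asymp V_i$, and each of the two short sums $\asymp W_i$, with the splitting $U_i=\tfrac12((1-\eta)N_i)^{1/3}$, $V_i=\tfrac12(\eta N_i)^{1/3}$ ensuring that the determined value $m_1=n_i-m_2-m_3-m_4-m_5$ lies in $[1,N_i]$ for a positive proportion of tuples.

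The main obstacle I anticipate is the error analysis, not the construction of the main term. The prime number theorem in arithmetic progressions yields an acceptably small error only for $q_i\le(\log N_i)^B$, whereas $\mathfrak{M}_i$ extends to $q_i\le P_i=N_i^{1/9-2\epsilon}$. The delicate feature is that $P_i$ exceeds the short modulus $W_i\asymp N_i^{5/54}$, so for $W_i<q_i\le P_i$ the main-term approximation of the short sums $S_3(\alpha_i,W_i)$ is simply false and cannot be used; this range must instead be bounded directly, via Weyl-type estimates for the cubic sums together with the small measure of the high-denominator arcs, and shown to contribute $\ll U_iV_iW_i^2L^{-1}$. Since the target error saves only a single factor of $L$, I expect the genuine care to lie in the bookkeeping across the three regimes $q_i\le(\log N_i)^B$, $(\log N_i)^B<q_i\le W_i$, and $W_i<q_i\le P_i$.
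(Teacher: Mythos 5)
Your construction of the main term is sound: the local factors assemble into $A(n_i,q)$ exactly as in \eqref{An}, the substitution $u=t^{3}$ produces the weights $(m_{2}m_{3}m_{4}m_{5})^{-2/3}$ and the constant $3^{-4}$, the $2$-adic factor $1+A(n_i,2)=1-(-1)^{n_i}$ is the source of the oddness hypothesis, and the two-sided bound on $\mathfrak{J}(n_i)$ follows from the variable-by-variable Riemann-sum comparison you describe. The genuine gap sits precisely where you place the ``genuine care'': the error analysis on the enlarged major arcs, and the resolution you propose for it would fail by a power of $N_i$, not merely require careful bookkeeping. For $(\log N_i)^{B}<q_i\le P_i=N_i^{1/9-2\epsilon}$ you suggest bounding the contribution directly by Weyl-type estimates combined with the small measure of the arcs. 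Even granting square-root cancellation in $q_i$ for $f$, $S_3(\cdot,U_i)$ and $S_3(\cdot,V_i)$ (for $f$ this is itself unavailable unconditionally in that range of moduli), the two factors $S_3(\cdot,W_i)$ have length $W_i\asymp N_i^{5/54}<P_i$ and admit no saving at all there, so the best such a computation yields is of the shape $\sum_{q\le P_i}\sum_{a}\int_{|\beta|\le 1/(qQ_i)}N_iU_iV_iW_i^{2}\,q^{-3/2}(1+N_i|\beta|)^{-3/2}\,\mathrm{d}\beta\ll P_i^{1/2}U_iV_iW_i^{2}$, which exceeds the target $U_iV_iW_i^{2}L^{-1}$ by a factor of about $N_i^{1/18}$. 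The three-regime split you outline therefore cannot close.

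The missing idea --- and the one the paper relies on, since it gives no proof but cites the iterative method of Liu and Zhan --- is to abandon pointwise approximation of $f(\alpha_i,N_i)$ on arcs with large $q_i$. One instead decomposes $f$ through the explicit formula for $\psi(x;q,a)$ into the expected main term plus a sum over zeros of Dirichlet $L$-functions, and controls the zero-sum contribution \emph{in mean square over} $\mathfrak{M}_i$ by zero-density estimates and the large sieve, pairing it via Cauchy's inequality with $\int_{0}^{1}|S_3(\alpha_i,U_i)S_3(\alpha_i,V_i)S_3^{2}(\alpha_i,W_i)|^{2}\,\mathrm{d}\alpha_i$; the same substitution is then performed iteratively for each cubic generating function in turn. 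That is what makes major arcs as large as $q_i\le N_i^{1/9-2\epsilon}$ admissible. Retaining your classical PNT-in-arithmetic-progressions argument would force $P_i\le(\log N_i)^{B}$, which is incompatible with the minor-arc estimates (Lemmas \ref{lemfS3} and \ref{lemS39}) the rest of the paper depends on.
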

\begin{proof}
The proof of this lemma is a standard application of the iterative argument developed by Liu and Zhan (see \cite{LJY,LZ}, etc.). Thus we omit its proof here.
\end{proof}

\begin{lemma}\label{lemJn}
For $(1-\eta)N_{i}\leq n_{i}\leq N_{i}$, we have
\begin{equation*}
  \mathfrak{J}(n_{i})\geq 3^{4}(\sqrt[3]{7}-1)(1-\eta)^{4}U_{i}V_{i}W_{i}^{2}.
\end{equation*}
\end{lemma}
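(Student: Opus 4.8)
The plan is to exploit the special shape of the summand together with the arithmetic of the ranges. In $\mathfrak{J}(n_i)$ the variable $m_1=n_i-m_2-m_3-m_4-m_5$ is forced to be a positive integer, so the only condition coupling the four cube variables is $m_2+m_3+m_4+m_5\le n_i-1$; apart from this the summand $(m_2m_3m_4m_5)^{-2/3}$ splits as a product. I would therefore first restrict the summation to the product sub-box
\[
U_i^3<m_2\le 7U_i^3,\qquad V_i^3<m_3\le 8V_i^3,\qquad W_i^3<m_4,m_5\le 8W_i^3,
\]
chosen so that the coupling constraint is satisfied for \emph{every} tuple in the box. Since the omitted summands are nonnegative, passing to this sub-box can only decrease $\mathfrak{J}(n_i)$, and on it the summation decouples completely.

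The decisive point is to verify that this sub-box really lies inside the region $m_2+m_3+m_4+m_5\le n_i-1$ for all admissible $n_i$. Here I would use the design identities $8U_i^3=(1-\eta)N_i$ and $8V_i^3=\eta N_i$ (so that $8U_i^3+8V_i^3=N_i$) together with $W_i^3=U_i^{5/6}=o(N_i)$. On the sub-box the largest value of the left-hand side is
\[
7U_i^3+8V_i^3+16W_i^3=\tfrac78 N_i+\tfrac18\eta N_i+16W_i^3,
\]
and since $n_i\ge(1-\eta)N_i$ the required inequality reduces to $16W_i^3+1\le\tfrac18(1-9\eta)N_i$, which holds once $\eta<\tfrac19$ and $N_i$ is large. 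This is the heart of the matter: the cut-off $7U_i^3$ (rather than the full $8U_i^3$) is exactly what leaves room for $m_3,m_4,m_5$ in the worst case $n_i=(1-\eta)N_i$, and it is also what produces the constant $\sqrt[3]{7}$ below.

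On the sub-box the sum factorizes, and each one-dimensional factor is evaluated by comparison with an integral: for the decreasing function $t^{-2/3}$ one has $\sum_{A<m\le B}m^{-2/3}\ge\int_A^B t^{-2/3}\,dt-A^{-2/3}=3\bigl(B^{1/3}-A^{1/3}\bigr)-A^{-2/3}$, the correction being negligible against the main term. This gives $3(\sqrt[3]{7}-1)U_i$ from the $m_2$-sum, $3V_i$ from the $m_3$-sum, and $3W_i$ from each of the $m_4$- and $m_5$-sums. Multiplying the four contributions yields
\[
\mathfrak{J}(n_i)\ge 3^4(\sqrt[3]{7}-1)U_iV_iW_i^2\bigl(1-o(1)\bigr),
\]
and since $(1-\eta)^4<1$ is a fixed constant while the factor $1-o(1)$ tends to $1$, the former bounds the latter from below for $N_i$ large, giving the stated inequality. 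I expect the only genuine obstacle to be the second step: one must fix the cut-off on $m_2$ so that the coupling constraint holds \emph{uniformly} in $n_i\in[(1-\eta)N_i,N_i]$ while still recovering a clean constant — a larger cut-off fails at $n_i=(1-\eta)N_i$, whereas a smaller one weakens the bound below what the later arc estimates require.
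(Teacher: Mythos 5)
Your proposal is correct and follows essentially the same route as the paper: restrict $m_2$ to $(U_i^3,7U_i^3]$ so that the box lies inside the summation domain for all $n_i\in[(1-\eta)N_i,N_i]$, let the sum factor, and bound each factor by the integral $3(B^{1/3}-A^{1/3})$, absorbing the error terms into the factor $(1-\eta)^4$. If anything, your verification that $m_1=n_i-m_2-m_3-m_4-m_5$ stays positive (via $7U_i^3+8V_i^3+16W_i^3\le n_i-1$ for $\eta<1/9$) is spelled out more carefully than in the paper, which only records the trivial bound $m_1\le N_i$.
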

\begin{proof}
The domain of the sum $\mathfrak{J}(n_{i})$ can be written as
\begin{equation*}
\mathfrak{D}=\left\{\left(m_{1}, m_{2},\ldots, m_{5}\right): \begin{array}{l}
m_1 \leq N_{i}, U_{i}^{3}<m_{2} \leq 8 U_{i}^{3}, V_{i}^{3}<m_{3} \leq 8 V_{i}^3, \\
W_{i}^3<m_{4},m_{5} \leq 8 W_{i}^3, m_{1}=n_{i}-m_{2}-m_{3}-m_{4}-m_{5}
\end{array}\right\} .
\end{equation*}
Let
\begin{equation*}
\mathfrak{D}^{*}=\left\{\left(m_{1}, m_{2},\ldots, m_{5}\right): \begin{array}{l}
U_{i}^{3}<m_{2} \leq 7 U_{i}^{3}, V_{i}^{3}<m_{3} \leq 8 V_{i}^3, \\
W_{i}^3<m_{4},m_{5} \leq 8 W_{i}^3, m_{1}=n_{i}-m_{2}-m_{3}-m_{4}-m_{5}
\end{array}\right\} .
\end{equation*}
For $(m_{1}, m_{2},\ldots, m_{5})\in \mathfrak{D}^{*}$, we deduce from $(1-\eta)N_{i}\leq n_{i}\leq N_{i}$ that
\begin{equation*}
  m_{1}=n_{i}-m_{2}-m_{3}-m_{4}-m_{5}\leq N_{i}.
\end{equation*}
Thus $\mathfrak{D}^{*}$ is a subset of $\mathfrak{D}$. Then we have
\begin{equation*}
\begin{aligned}
\mathfrak{J}(n_i) & \geq \sum_{(m_{2}, m_{3}, m_{4}, m_{5}) \in \mathfrak{D}^{*}} (m_{2}m_{3}m_{4}m_{5})^{-\frac{2}{3}} \\
& \geq \sum_{U_{i}^{3}<m_{2} \leq 7 U_{i}^{3}} m_{2}^{-\frac{2}{3}} \sum_{V_{i}^{3}<m_{3} \leq 8V_{i}^{3}} m_{3}^{-\frac{2}{3}}\sum_{W_{i}^{3}<m_{4},m_{5}\leq 8U_{i}^{3}} (m_{4}m_{5})^{-\frac{2}{3}}\\
&\geq 3^{4}(\sqrt[3]{7}-1)(1-\eta)^{4}U_{i}V_{i}W_{i}^{2},
\end{aligned}
\end{equation*}
which completes the proof of this lemma.
\end{proof}

\begin{lemma}\label{lemSn}
Let $\Xi(N_{i},k)=\{n_{i}\geq 2: n_{i}=N_{i}-2^{v_{1}}-2^{v_{2}}-\cdots-2^{v_{k}}, 4\leq v_{1},v_{2},\ldots,v_{k}\leq L\}$ with $k\geq 45$. Then for $N_{1} \equiv N_{2} \equiv 1 \pmod 2$, we have
\end{lemma}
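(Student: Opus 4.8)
The plan is to establish the stated lower bound by the usual circle-method device: expand each singular series into its Euler product, interchange the order of summation with the finite sum over the powers of $2$, isolate a \emph{resonant} main term, and bound the rest by cancellation. First I would record the structural facts about $\mathfrak{S}$. Since $C_{1}(q,a)=\mu(q)$, the factor $A(n,q)$ vanishes unless $q$ is squarefree, so that $\mathfrak{S}(n)=\sum_{q}A(n,q)=\prod_{p}\bigl(1+A(n,p)\bigr)$ with $A(n,p)=\frac{\mu(p)}{\varphi^{5}(p)}\sum_{(a,p)=1}C_{3}^{4}(p,a)e(-an/p)$. The local factor at $2$ is $1+A(n,2)=1-(-1)^{n}$, which equals $2$ for odd $n$; for odd primes one has $|A(n,p)|\ll p^{-2}$ (using $|C_{3}(p,a)|\le 2\sqrt{p}$ when $p\equiv 1\pmod 3$ and $|C_{3}(p,a)|=1$ when $p\equiv 2\pmod 3$), so $\prod_{p>2}(1+A(n,p))$ converges and is bounded. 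Because $N_{1}\equiv N_{2}\equiv 1\pmod 2$ and $S:=2^{v_{1}}+\cdots+2^{v_{k}}$ is even, each $n_{i}=N_{i}-S$ is odd, whence $\mathfrak{S}(n_{i})\gg 1$ by Lemma \ref{lem1}.

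Next, writing $T(\beta)=\sum_{4\le v\le L}e(2^{v}\beta)$ and using the absolute convergence of each singular series to justify interchanging $\sum_{v_{1},\ldots,v_{k}}$ with the sum over the moduli, I would obtain
\begin{align*}
\sum_{4\le v_{1},\ldots,v_{k}\le L}\mathfrak{S}(N_{1}-S)\,\mathfrak{S}(N_{2}-S)
&=\sum_{q_{1},q_{2}}\frac{\mu(q_{1})\mu(q_{2})}{\varphi^{5}(q_{1})\varphi^{5}(q_{2})}\\
&\quad\times\sum_{a_{1},a_{2}}C_{3}^{4}(q_{1},a_{1})C_{3}^{4}(q_{2},a_{2})\,
e\!\left(-\frac{a_{1}N_{1}}{q_{1}}-\frac{a_{2}N_{2}}{q_{2}}\right)
T\!\left(\frac{a_{1}}{q_{1}}+\frac{a_{2}}{q_{2}}\right)^{k},
\end{align*}
where each $a_{i}$ runs over reduced residues modulo $q_{i}$. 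The key observation is that $T(\beta)=L-3$ when $\beta\in\frac12\mathbb{Z}$, and that for squarefree $q_{1},q_{2}$ the terms of full size $L-3$ are exactly those with $\frac{a_{1}}{q_{1}}+\frac{a_{2}}{q_{2}}\in\frac12\mathbb{Z}$; for all other $\beta$ the orbit of $2^{v}$ forces genuine cancellation in $T(\beta)$.

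I would therefore split the double sum into the \emph{resonant} terms (those half-integer $\beta$) and the \emph{non-resonant} remainder. The resonant part is the main term; since the resonance condition constrains only the odd parts of the denominators, it localises prime-by-prime via the Chinese remainder theorem and factors as a product of local densities. The meticulous calculation is to evaluate the $p=2$ factor and the first few odd-prime factors explicitly, bounding the remaining tail, so as to produce a main term of the form (explicit constant)$\,\times(L-3)^{k}$. The point of passing to the average rather than using the pointwise bound $\mathfrak{S}(n)\ge 2\prod_{p>2}(1-|A(n,p)|)$ is that the latter is badly lossy at the smallest ramified prime $p=7$, while the averaged main term recovers a constant close to the $2$-adic value and thereby keeps the admissible $k$ small.

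Finally I would bound the non-resonant contribution in two ranges. For moduli up to a fixed cutoff $Q_{0}$ there are only finitely many non-resonant $\beta$, each satisfying $|T(\beta)|\le(1-\delta_{0})(L-3)$ for a uniform $\delta_{0}=\delta_{0}(Q_{0})>0$, so these terms are $\ll(1-\delta_{0})^{k}(L-3)^{k}$; here the hypothesis $k\ge 45$ is precisely what forces $(1-\delta_{0})^{k}$ to be negligible. For moduli beyond $Q_{0}$ I would use the trivial bound $|T(\beta)|\le L-3$ together with $\sum_{q>Q_{0}}\varphi^{-5}(q)\sum_{a}|C_{3}(q,a)|^{4}\ll Q_{0}^{-1+\epsilon}$, coming from $|A(n,q)|\ll q^{-2+\epsilon}$ on squarefree $q$. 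I expect the main obstacle to be exactly this balance: the cancellation estimate $|T(\beta)|\le(1-\delta_{0})(L-3)$ must be made uniform and combined with the tail so that the threshold is driven down to $k=45$, while simultaneously the resonant main term must be computed sharply enough to deliver a constant large enough for the final value $k=48$.
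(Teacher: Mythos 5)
Your structural set-up is sound and your Fourier-side expansion is correct as far as it goes: the Euler product, the evaluation $1+A(n,2)=2$ for odd $n$, the bound $|A(n,p)|\ll p^{-2}$, and the identity $\sum_{v_1,\dots,v_k}e(S\beta)=T(\beta)^k$ after interchanging the absolutely convergent sums are all fine, and your reason for averaging rather than using a pointwise lower bound on $\mathfrak{S}(n)$ (the loss at the small ramified primes) is exactly the right motivation. But the lemma is a fully explicit numerical inequality --- its entire content is the constant $3.71280584$ and the sufficiency of $k\ge 45$ --- and your proposal defers precisely those computations (``the meticulous calculation is to evaluate\dots'', ``I expect the main obstacle to be exactly this balance''). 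That is not a presentational quibble, because the step you flag as the obstacle is the step that can genuinely fail. Your plan rests on a uniform saving $|T(\beta)|\le(1-\delta_0)(L-3)$ for all non-resonant $\beta$ of denominator at most $Q_0$, to be combined with a tail bound that forces $Q_0$ to be large. But $\delta_0(Q_0)$ degenerates as $Q_0$ grows: for $q=2^m-1$ the orbit of $2$ modulo $q$ is $\{1,2,\dots,2^{m-1}\}$, so $\sum_{s\le m}e(2^s/q)=m+O(1)$ and hence $\max_a|T(a/q)|=(1-O(1/m))(L-3)$. Thus $(1-\delta_0)^k$ with fixed $k=45$ gives essentially nothing once $Q_0$ is large, and the only way to close the argument is to play the per-modulus coefficient decay $\varphi^{-5}(q)\sum_{a}|C_3(q,a)|^4\ll q^{-2+\epsilon}$ against the weakening cancellation, modulus by modulus, with explicit constants over infinitely many $q$. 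You have not shown that this bookkeeping closes at all, let alone that it produces a main-term constant large enough for the rest of the paper.

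The paper avoids the infinite-moduli problem entirely by staying on the physical side. It bounds every local factor $1+A(n,p)$ pointwise by explicit numerical constants except at the four bad primes $3,7,13,19$, obtaining $\mathfrak{S}(n)\ge 2C\prod_{p\in\{3,7,13,19\}}(1+A(n,p))$ with $C=0.96345628$; it then sorts the $n_i$ into residue classes modulo the single modulus $q=3\cdot7\cdot13\cdot19=5187$, uses $\sum_{1\le j\le p}(1+A(j,p))^2\ge p$ (a consequence of $\sum_jA(j,p)=0$), and counts $2^{v_1}+\cdots+2^{v_k}$ in residue classes modulo $5187$ via the exponential sum $\theta(t)=\sum_{s\le\delta(q)}e(t2^s/q)$. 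Your resonance analysis is thereby needed for only one modulus, where $\delta(5187)=36$ and $\max_t|\theta(t)|\approx 18$ are computed exactly, and $k\ge 45$ enters only to make $(q-1)(1/2)^k$ negligible against $1$. To salvage your route you would need either to truncate to a single well-chosen modulus as the paper does, or to supply the explicit modulus-by-modulus estimates that your proposal currently leaves as a hope.
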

\begin{equation}\label{SSn}
\sum_{\substack{n_{1} \in \Xi\left(N_{1}, k\right)\\n_{2} \in \Xi\left(N_{2}, k\right)\\ n_{1} \equiv n_{2} \equiv 1 \pmod 2}} \mathfrak{S}\left(n_{1}\right) \mathfrak{S}\left(n_{2}\right) \geq 3.71280584L^{k}.
\end{equation}

\begin{proof}
From (3.11) in Liu and L\"{u} \cite{LL2011JNT}, we know that $A(n_{i},p)$ is multiplicative and
\begin{equation}\label{S}
  \mathfrak{S}(n_{i})=\prod_{p\geq 2}(1+A(n_{i},p)).
\end{equation}
With the help of computer, we have
\begin{equation*}
\begin{aligned}
  &1+A(n_{i},3)\geq 0.9375,\ 1+A(n_{i},5)\geq 0.99609375,\ 1+A(n_{i},7)\geq 0.72916666, \\
  & 1+A(n_{i},11)\geq 0.9999,\ 1+A(n_{i},13)\geq 0.93098958,\ 1+A(n_{i},17)\geq 0.99998474,\\
  &1+A(n_{i},19)\geq 0.98225308, \ 1+A(n_{i},23)\geq 0.99999573,\ 1+A(n_{i},29)\geq 0.99999837,\\
  &1+A(n_{i},31)\geq  0.99303333,\  1+A(n_{i},37)\geq0.99387538,\  1+A(n_{i},41)\geq 0.9999996,\\
  &\cdots\cdots\\
  &1+A(n_{i},193)\geq 0.99982076,\ 1+A(n_{i},197)\geq 0.99999999, \ 1+A(n_{i},199)\geq 0.99983815.
\end{aligned}
\end{equation*}
Thus, we can find that when $3\leq p\leq 200$ and $ p\neq3,7,13,19$, the value of $1+A(n_{i},p)$ is very close to 1. In addition, we can obtain
\begin{equation}\label{A(5)}
   (1+A(n_{i},5))(1+A(n_{i},11))(1+A(n_{i},17))\prod_{23\leq p\leq 200} 1+A(n_{i},p)\geq 0.96976071.
\end{equation}
From (3.15) and (3.17) in Liu and L\"{u} \cite{LL2011JNT}, we note that
if $p\geq 5$, $p \equiv 2\pmod 3$ and $(a,p)=1$, then
\begin{equation*}
1+A(n_{i}, p)=
\begin{cases}1-\frac{1}{(p-1)^{4}}, & p \mid n_{i}, \\
 1+\frac{1}{(p-1)^{5}}, & p \nmid n_{i},
 \end{cases}
\end{equation*}
and if $p\geq 13$ and $p \equiv 1\pmod 3$, then
\begin{equation*}
1+A(n_{i}, p)>1-\frac{(2 \sqrt{p}+1)^{4}}{(p-1)^{4}}.
\end{equation*}
Then with the help of computer again we get
\begin{equation}\label{200+}
\begin{aligned}
&\prod_{\substack{200 \leq p<10^{6}}}(1+A(n_{i}, p)) \\
&\geq \prod_{\substack{200 \leq p<10^{6} \\
p \equiv 1\pmod 3}}\left(1-\frac{(2 \sqrt{p}+1)^{4}}{(p-1)^{4}}\right) \prod_{\substack{200 \leq p<10^{6} \\
p \equiv 2\pmod 3 \\
p \mid n}}\left(1-\frac{1}{(p-1)^{4}}\right) \prod_{\substack{200 \leq p<10^{6} \\
p \equiv 2\pmod 3 \\
p \nmid n}}\left(1+\frac{1}{(p-1)^{5}}\right) \\
&\geq \prod_{\substack{200 \leq p<10^{6} \\
p \equiv 1\pmod 3}}\left(1-\frac{(2 \sqrt{p}+1)^{4}}{(p-1)^{4}}\right) \prod_{\substack{200 \leq p<10^{6} \\
p \equiv 2\pmod 3}}\left(1-\frac{1}{(p-1)^{4}}\right) \\
&\geq 0.99351588.
\end{aligned}
\end{equation}
From Liu and L\"{u} \cite[p. 726]{LL2011JNT}, we have that if $p\geq 1138$ and $p\equiv 1\pmod 3$, then
\begin{equation*}
1-\frac{(2 \sqrt{p}+1)^{4}}{(p-1)^{4}} \geq\left(1-\frac{1}{(p-1)^{2}}\right)^{17}.
\end{equation*}
Then we get
\begin{equation}\label{100000+}
\begin{aligned}
\prod_{p \geq 10^{6}}(1+A(n_{i}, p)) & \geq \prod_{\substack{p \geq 10^{6}\\ p \equiv 1\pmod 3}}\left(1-\frac{1}{(p-1)^{2}}\right)^{17}\prod_{\substack{p \geq 10^{6}\\ p\equiv 2\pmod 3}}\left(1-\frac{1}{(p-1)^{4}}\right) \\
&>\prod_{m \geq 10^{6}+1}\left(1-\frac{1}{(m-1)^{2}}\right)^{17}\geq 0.999983.
\end{aligned}
\end{equation}
Form \eqref{A(5)}, \eqref{200+} and \eqref{100000+}, we have
\begin{equation}\label{C}
  (1+A(n_{i},5))(1+A(n_{i},11))(1+A(n_{i},17))\prod_{p\geq 23}(1+A(n_{i},p))\geq 0.96345628:=C.
\end{equation}
Let $q=3\times7\times13\times19=5187$, by \eqref{S} and \eqref{C} we get
\begin{equation}\label{SS}
\begin{aligned}
&\sum_{\substack{n_{1} \in \Xi\left(N_{1}, k\right) \\ n_{2} \in \Xi\left(N_{2}, k\right) \\ n_{1} \equiv n_{2} \equiv 1 \pmod 2}}
\mathfrak{S}\left(n_{1}\right) \mathfrak{S}\left(n_{2}\right)\\
&\geq\left(2C\right)^{2} \sum_{\substack{n_{1} \in \Xi\left(N_{1}, k\right) \\ n_{2} \in \Xi\left(N_{2}, k\right) \\ n_{1} \equiv n_{2} \equiv 1 \pmod 2}} \prod_{1\leq i \leq 2} \prod_{ p_{i}=3,7,13,19}\left(1+A\left(n_{i}, p_{i}\right)\right)\\
&\geq \left(2C\right)^{2} \sum_{\substack{1 \leq j \leq q}} \sum_{\substack{n_{1} \in \Xi\left(N_{1}, k\right) \\ n_{2} \in \Xi\left(N_{2}, k\right) \\ n_{1} \equiv n_{2} \equiv 1 \pmod 2 \\ n_{1} \equiv n_{2} \equiv j \pmod q}}\prod_{1\leq i \leq 2}\prod_{ p_{i}=3,7,13,19}\left(1+A\left(j, p_{i}\right)\right)\\
&\geq \left(2C\right)^{2}\sum_{1 \leq j\leq q} \prod_{p=3,7,13,19}(1+A(j, p))^{2}
\sum_{\substack{n_{1} \in \Xi\left(N_{1}, k\right) \\ n_{1}  \equiv 1 \pmod 2\\n_{1} \equiv j \pmod q}} 1.\\
\end{aligned}
\end{equation}
To estimate the innermost sum at the right side of \eqref{SS}, we take a similar argument to Lemma 4.4 in Zhao \cite{ZLL2014}.
We can deduce that
\begin{equation*}
S:=\sum_{\substack{n_{1} \in \Xi\left(N_{1}, k\right) \\ n_{1} \equiv 1 \pmod 2\\n_{1} \equiv j \pmod q}} 1=\left(\frac{L}{\delta(q)}+O(1)\right)^{k}\sum_{\substack{1\leq v_{1},v_{2},\ldots,v_{k}\leq \delta(q)\\2^{v_{1}}+2^{v_{2}}+\cdots+2^{v_{k}}\equiv N-j \pmod q}}1,
\end{equation*}
where $\delta(q)$ denotes the smallest positive integer $\delta$ such that $2^{\delta}\equiv 1 \pmod q$.
Noting that
\begin{equation*}
S = \frac{1}{q}\left(\frac{L}{\delta(q)}+O(1)\right)^{k}  \sum_{t=0}^{q-1} e\left(\frac{t (N-j)}{q}\right)\theta^{k}(t),
\end{equation*}
we get
\begin{equation*}
\begin{aligned}
S & \geq \frac{1}{q}\left(\frac{L}{\delta(q)}+O(1)\right)^{k}\left(\delta(q)^{k}-(q-1)\left(\max_{0<t\leq q-1}\left|\theta(t)\right|\right)^{k}\right) \\
&\geq \frac{L^{k}}{q}\left(1-(q-1)\left(\frac{\max\limits_{0<t\leq q-1}\left|\theta(t)\right|}{\delta(q)}\right)^{k}\right)+O\left(L^{k-1}\right),
\end{aligned}
\end{equation*}
where $\theta(t)=\sum\limits_{1 \leq s \leq \delta(q)} e\left(\frac{t 2^{s}}{q}\right)$.
Recalling the definition of $\delta(q)$, we have
\begin{equation*}
\delta(q)=36 \quad \text{and}\quad  \max_{0<t\leq q-1}|\theta(t)|=18.00001822\ldots.
\end{equation*}
Therefore, we can get
\begin{equation*}
S \geq  0.00019278L^{k}.
\end{equation*}
From \eqref{SS} and
\begin{equation*}
\begin{aligned}
\sum_{1\leq j\leq p}(1+A(j, p))^{2} &=p+2 \sum_{1\leq j\leq p} A(j, p)+\sum_{1\leq j\leq p}(A(j, p))^{2} \\
&=p+\sum_{1\leq j\leq p}(A(j, p))^{2} \\
& \geq p,
\end{aligned}
\end{equation*}
we have
\begin{equation*}
\begin{split}
\sum_{\substack{n_{1} \in \Xi\left(N_{1}, k\right) \\ n_{2} \in \Xi\left(N_{2}, k\right) \\ n_{1} \equiv n_{2} \equiv 1 \pmod 2}} \mathfrak{S}\left(n_{1}\right) \mathfrak{S}\left(n_{2}\right) \geq \left(2C\right)^{2}\cdot 0.00019278\cdot qL^{k} \geq 3.71280584L^{k},
\end{split}
\end{equation*}
which completes the proof of this lemma.
\end{proof}

\begin{lemma}\label{lemS}
Suppose that $\alpha_{i}$ is a real number, and that there exist integers $a_{i}\in\mathbb{Z}$ and $q_{i}\in\mathbb{N}$ with
$$(a_{i},q_{i})=1,\quad 1\leq q_{i}\leq N_{i}^{\frac{1}{2}},\quad |q_{i}\alpha_{i}-a_{i}|\leq N_{i}^{-\frac{1}{2}}.$$
Then we have
\begin{equation}\label{S31}
S_{3}(\alpha_{i},X) \ll X^{1-\frac{1}{12}+\varepsilon}+\frac{X^{1+\varepsilon}}{\sqrt{q_{i}\left(1+N_{i}\left|\alpha_{i}-\frac{a_{i}}{q_{i}}\right|\right)}},
\end{equation}
where $X$ can be taken as $U_{i}$ or $V_{i}$.
\end{lemma}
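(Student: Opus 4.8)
The plan is to split the estimate according to the two displayed terms, which have distinct origins, and then take the maximum. Since $X\in\{U_i,V_i\}$ satisfies $X\asymp N_i^{1/3}$, we have $X^3\asymp N_i$; writing $\beta_i=\alpha_i-a_i/q_i$, the hypotheses give $|\beta_i|\le 1/(q_iN_i^{1/2})\le q_i^{-2}$ (using $q_i\le N_i^{1/2}$), so $\alpha_i$ carries a genuine rational approximation and both Weyl's inequality and the classical major-arc approximation are available. As a preliminary reduction I would replace $\sum_{p\sim X}(\log p)e(p^3\alpha_i)$ by $\sum_{n\sim X}\Lambda(n)e(n^3\alpha_i)$, the contribution of proper prime powers being $O(X^{1/2+\varepsilon})$ and hence absorbed into the first term since $X^{1/2}\le X^{11/12}$.

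For the minor-arc-type contribution, first I would expand $\Lambda(n)$ by Vaughan's identity with suitable cut-offs, reducing the sum to a bounded number of Type~I sums $\sum_{m\le M}a_m\sum_n e((mn)^3\alpha_i)$ and Type~II sums $\sum_m a_m\sum_n b_n e((mn)^3\alpha_i)$ over dyadic ranges. The Type~I sums are comparatively easy: the inner sum is a pure cubic Weyl sum in $n$, to which Weyl's inequality applies once one notes that $m^3\alpha_i$ inherits a rational approximation, and since $M$ is only a small power of $X$ the outer sum over $m$ costs little. The Type~II sums are the crux: here I would apply Cauchy--Schwarz to strip the rough coefficient off one variable and then Weyl-difference the cubic phase twice (degree three requires two differencing steps), reducing matters to bounding expressions of the shape $\sum_{h_1,h_2}\min\!\big(X,\|6h_1h_2 m^3\alpha_i\|^{-1}\big)$ via the Dirichlet data $(a_i,q_i,\beta_i)$. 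Optimizing the Vaughan ranges against this bound is what produces the saving, yielding the term $X^{1-\frac{1}{12}+\varepsilon}$.

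For the major-arc-type contribution I would instead invoke the standard approximation $S_3(\alpha_i,X)=\frac{C_3(q_i,a_i)}{\varphi(q_i)}\int_X^{2X}e(\beta_i t^3)\,\frac{dt}{\log t}+(\text{error})$. Bounding the complete cubic Gauss sum by $|C_3(q_i,a_i)|\ll q_i^{1/2+\varepsilon}$ gives $|C_3(q_i,a_i)|/\varphi(q_i)\ll q_i^{-1/2+\varepsilon}$, while the first-derivative (van der Corput) test bounds the oscillatory integral by $\ll X(1+X^3|\beta_i|)^{-1}$. Together these give $\ll X^{1+\varepsilon}q_i^{-1/2}(1+N_i|\beta_i|)^{-1}$, which is stronger than, and therefore dominated by, the second displayed term $X^{1+\varepsilon}/\sqrt{q_i(1+N_i|\beta_i|)}$. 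In each regime one of the two terms dominates, so taking the maximum yields the estimate uniformly; specializing $X$ to $U_i$ or $V_i$ then finishes the proof.

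I expect the Type~II analysis to be the main obstacle: organizing the two differencing steps for the cubic, controlling the exponential sum over the difference variables $h_1,h_2$ through the single Farey approximation, and tuning the Vaughan cut-offs so that exactly the exponent $1/12$ survives. A secondary subtlety is gluing the minor- and major-arc bounds uniformly across the full range $q_i\le N_i^{1/2}$, which is precisely why the weaker square-root dependence on $1+N_i|\beta_i|$ (rather than the first power the integral actually delivers) is the convenient form in which to state the lemma. Alternatively, the whole estimate can be quoted from the standard theory of Weyl sums over primes for cubes and merely specialized to $X\in\{U_i,V_i\}$.
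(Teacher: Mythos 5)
The paper does not actually prove this lemma: it is quoted verbatim as Lemma 2.3 of Zhao \cite{ZLL1}, so your closing remark---that the estimate ``can be quoted \ldots and merely specialized to $X\in\{U_i,V_i\}$''---is the entirety of the paper's argument. Your detailed sketch, however, contains a genuine gap: the machinery you describe does not deliver the exponent $1-\frac{1}{12}$. Vaughan's identity followed by Cauchy--Schwarz and two Weyl-differencing steps on the cubic phase in the Type~II sums is precisely the classical Vinogradov--Hua route, and for $k=3$ that route saves only $4^{1-k}=\frac{1}{16}$ (and in practice somewhat less after the bilinear ranges are optimized), i.e.\ it yields at best $X^{1-\frac{1}{16}+\varepsilon}$ in place of $X^{\frac{11}{12}+\varepsilon}$. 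The saving $\frac{1}{12}$ is exactly the content of Zhao's lemma and rests on his refined treatment of the Type~II sums (building on Ren's and Kumchev's improvements, which use mean-value and sieve-weighted input rather than pure Weyl differencing). So the difficulty you flag is not a matter of ``tuning the Vaughan cut-offs'': the stated exponent is unreachable by the differencing argument you propose.

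A second, smaller problem is your derivation of the term $X^{1+\varepsilon}/\sqrt{q_i(1+N_i|\alpha_i-a_i/q_i|)}$ from the approximation $S_3(\alpha_i,X)\approx\frac{C_3(q_i,a_i)}{\varphi(q_i)}\int_X^{2X}e(\beta_i t^3)\frac{\mathrm{d}t}{\log t}$. That approximation depends on the prime number theorem in arithmetic progressions and is available unconditionally only for $q_i\ll(\log N_i)^{A}$, whereas the lemma must hold uniformly for all $q_i\leq N_i^{1/2}$. In the actual proofs this term is extracted from the Type~I part of the decomposition, via the complete Gauss sum bound $C_3(q,am^3)/q\ll q^{-1/2+\varepsilon}$ together with the oscillatory integral, not from a Siegel--Walfisz main term. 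None of this affects the paper, which simply cites Zhao; but as a self-contained argument your outline would need to be replaced by, or reduced to, that citation.
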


\begin{proof}
This lemma is Lemma $2.3$ in Zhao \cite{ZLL1}.
\end{proof}

\begin{lemma}\label{lemfS3}
Let $\mathfrak{m}_{i}$ be defined as in \eqref{2.2}. Then we have
\begin{equation}\label{S2}
\max _{\alpha_{i} \in \mathfrak{m}_{i}}|S_{3}(\alpha_{i},U_{i})| \ll N_{i}^{\frac{11}{36}+\epsilon},
\end{equation}
\begin{equation}\label{S3}
\max _{\alpha_{i} \in \mathfrak{m}_{i}}|S_{3}(\alpha_{i},V_{i})| \ll N_{i}^{\frac{11}{36}+\epsilon}.
\end{equation}
\end{lemma}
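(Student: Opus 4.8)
The plan is to combine Dirichlet's theorem on Diophantine approximation with the Weyl-type estimate of Lemma~\ref{lemS}, and then to exploit the minor-arc condition to force the second term in \eqref{S31} to be dominated by the first. Throughout, fix $\alpha_i\in\mathfrak{m}_i$ and take $X$ to be either $U_i$ or $V_i$; the crucial structural point is that in both cases $X\asymp N_i^{1/3}$, so the two estimates \eqref{S2} and \eqref{S3} will follow from one and the same computation.

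First I would invoke Dirichlet's theorem: for the given $\alpha_i$ there exist integers $a_i,q_i$ with $(a_i,q_i)=1$, $1\leq q_i\leq N_i^{1/2}$ and $|q_i\alpha_i-a_i|\leq N_i^{-1/2}$. These are exactly the hypotheses of Lemma~\ref{lemS}, which gives
$$
S_{3}(\alpha_i,X)\ll X^{1-\frac{1}{12}+\epsilon}+\frac{X^{1+\epsilon}}{\sqrt{q_i\left(1+N_i\left|\alpha_i-\frac{a_i}{q_i}\right|\right)}}.
$$
Since $X\asymp N_i^{1/3}$, the first term satisfies $X^{1-\frac{1}{12}+\epsilon}\asymp N_i^{\frac{11}{36}+\epsilon}$, which already matches the target bound. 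It therefore remains only to show that the second term does not exceed $N_i^{11/36+\epsilon}$.

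The key step is to bound the denominator from below using that $\alpha_i$ lies in the minor arcs, and here I would argue by cases on the size of $q_i$. If $q_i\leq P_i$, then $a_i/q_i$ is an admissible centre of a major arc, so the fact that $\alpha_i\notin\mathfrak{M}_i(a_i,q_i)$ forces $\left|\alpha_i-\frac{a_i}{q_i}\right|>\frac{1}{q_iQ_i}$, whence $q_i\left(1+N_i\left|\alpha_i-\frac{a_i}{q_i}\right|\right)>N_i/Q_i=N_i^{1/9-\epsilon}$. If instead $P_i<q_i\leq N_i^{1/2}$, then trivially $q_i\left(1+N_i\left|\alpha_i-\frac{a_i}{q_i}\right|\right)\geq q_i>P_i=N_i^{1/9-2\epsilon}$. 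In either case the denominator factor is $\gg N_i^{1/9-2\epsilon}$.

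Substituting this lower bound into the second term yields
$$
\frac{X^{1+\epsilon}}{\sqrt{q_i\left(1+N_i\left|\alpha_i-\frac{a_i}{q_i}\right|\right)}}\ll \frac{N_i^{\frac13+\epsilon}}{N_i^{\frac12\left(\frac19-2\epsilon\right)}}\ll N_i^{\frac13-\frac{1}{18}+\epsilon}=N_i^{\frac{5}{18}+\epsilon}.
$$
Since $\frac{5}{18}=\frac{10}{36}<\frac{11}{36}$, the second term is absorbed by the first, and taking the maximum over $\alpha_i\in\mathfrak{m}_i$ gives $N_i^{11/36+\epsilon}$ for both $X=U_i$ and $X=V_i$. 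The only genuinely substantive point is the case analysis producing the lower bound $N_i^{1/9-2\epsilon}$ on the denominator; everything else is exponent bookkeeping. What makes the argument work is precisely the calibration of $P_i$ and $Q_i$ in \eqref{PQL}, which keeps the minor-arc contribution of the second term strictly below the Weyl barrier $N_i^{11/36}$ coming from the first.
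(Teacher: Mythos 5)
Your proof is correct and follows essentially the same route as the paper's: Dirichlet approximation with denominator bound $N_i^{1/2}$, an application of Lemma~\ref{lemS}, and the same two-case argument (small $q_i$ forces $|\alpha_i-a_i/q_i|>1/(q_iQ_i)$ by the minor-arc condition, large $q_i$ exceeds $P_i$) yielding $q_i\left(1+N_i\left|\alpha_i-\frac{a_i}{q_i}\right|\right)\gg N_i^{1/9-2\epsilon}$, so that the second term of \eqref{S31} is dominated by the Weyl term $N_i^{11/36+\epsilon}$. The only cosmetic difference is that the paper packages the case analysis as a single minimum $\min\left(P_i^{1/2},N_i^{1/2}Q_i^{-1/2}\right)$.
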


\begin{proof}
The proof of \eqref{S3} is similar to that of \eqref{S2}, so we only present the proof of \eqref{S2}.
By Dirichlet's lemma on rational approximations, we can deduce that for any real number $\alpha_{i}\in \mathfrak{m}_{i}$, there exist integers $a_{i}$ and $q_{i}$ such that
\begin{equation*}
  \alpha_{i}=\frac{a_{i}}{q_{i}}+\lambda_{i},\ 1\leq q_{i}\leq \mathscr{Q}_{i}=N_{i}^{\frac{1}{2}}, \ |\lambda_{i}|\leq \frac{1}{q_{i}\mathscr{Q}_{i}}, \ (a_{i},q_{i})=1.
\end{equation*}
Since $\alpha_{i}\in \mathfrak{m}_{i}$, based on \eqref{PQL} and \eqref{2.2}, we observe that if $q_{i}\leq P_{i}=N_{i}^{\frac{1}{9}-2\epsilon}$, then $|\lambda_{i}|>\frac{1}{q_{i}Q_{i}}$; otherwise $q_{i}>P_{i}$.
In either case, we have
\begin{equation*}
  q_{i}^{\frac{1}{2}}(1+N_{i}|\lambda_{i}|)^{\frac{1}{2}}>\min(P_{i}^{\frac{1}{2}},N_{i}^{\frac{1}{2}}Q_{i}^{-\frac{1}{2}})=\min(N_{i}^{\frac{1}{18}-\epsilon},
  N_{i}^{\frac{1}{18}+\epsilon}).
\end{equation*}
This in combination with Lemma \ref{lemS} with $X=U_{i}$ leads to
\begin{equation*}
\begin{split}
  \max_{\alpha_{i} \in \mathfrak{m}_{i}}|S_{3}(\alpha_{i},U_{i})|&\ll N_{i}^{\frac{1}{3}-\frac{1}{36}+\epsilon}+N_{i}^{\frac{1}{3}-\frac{1}{18}+\epsilon}
  \ll N_{i}^{\frac{11}{36}+\epsilon}.
\end{split}
\end{equation*}
Thus we complete the proof of \eqref{S2}.
\end{proof}

\begin{lemma}\label{lem4.3}
For $k\geq 3$, let $\mathscr{M}$ be the union of intervals $\mathscr{M}(q,a)$ for
$$1\leq a\leq q\leq P^{k2^{1-k}}, \ \ (a,q)=1,$$
where
$$\mathscr{M}(q,a)=\{\alpha:|q\alpha-a|\leq P^{k(2^{1-k}-1)}\}.$$
For $u\geq 0$, define
\begin{equation*}
  \begin{split}
     \omega_{k}(p^{uk+v})=
     \begin{cases}
       kp^{-u-\frac{1}{2}}, & v=1,\\
       p^{-u-1}, & 2\leq v\leq k,
     \end{cases}
  \end{split}
\end{equation*}
and
$$\mathscr{J}_{0}=\sup_{\beta\in[0,1)}\int_{\mathscr{M}}\frac{\omega_{k}^{2}(q)|h^{2}(\alpha+\beta)|}{(1+P^{k}|\alpha-\frac{a}{q}|)^{2}}\mathrm{d}\alpha.$$
Suppose that $G(\alpha)$ and $h(\alpha)$ are integrable functions of period one. Let
\begin{equation*}
  g(\alpha)=g_{\mathcal{A}}(\alpha)=\sum_{x\in\mathcal{A}}e(x^{k}\alpha), \ \ \mathcal{A}\subseteq(P,2P]\cap\mathbb{N}
\end{equation*}
and $\mathfrak{m}\subseteq[0,1)$ be a measurable set. Then we have
\begin{equation*}
  \int_{\mathfrak{m}}g(\alpha)G(\alpha)h(\alpha)\mathrm{d}\alpha\ll P\mathscr{J}_{0}^{\frac{1}{4}}\left(\int_{\mathfrak{m}}|G(\alpha)|^{2}\mathrm{d}\alpha\right)^{\frac{1}{4}}
  \mathscr{J}^{\frac{1}{2}}(m)+P^{1-2^{-k}+\varepsilon}\mathscr{J}(m),
\end{equation*}
where
$$\mathscr{J}(m)=\int_{\mathfrak{m}}|G(\alpha)h(\alpha)|\mathrm{d}\alpha.$$
\end{lemma}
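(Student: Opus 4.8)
The plan is to dissect $\mathfrak m$ against the Weyl-level arcs $\mathscr M$, writing
\[
\int_{\mathfrak m}g(\alpha)G(\alpha)h(\alpha)\,\mathrm d\alpha=\int_{\mathfrak m\cap\mathscr M}gGh\,\mathrm d\alpha+\int_{\mathfrak m\setminus\mathscr M}gGh\,\mathrm d\alpha,
\]
and to estimate the two ranges by different devices, the second term of the lemma arising from the minor part and the first from the major part. On $\mathfrak m\setminus\mathscr M$ the argument is genuinely minor for $g$: by Weyl's inequality for $g_{\mathcal A}=\sum_{x\in\mathcal A}e(x^k\alpha)$ with $\mathcal A\subseteq(P,2P]$, together with the level $P^{k2^{1-k}}$ built into $\mathscr M$, one has $\sup_{\alpha\in\mathfrak m\setminus\mathscr M}|g(\alpha)|\ll P^{1-2^{-k}+\varepsilon}$. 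Extracting this supremum gives $\bigl|\int_{\mathfrak m\setminus\mathscr M}gGh\,\mathrm d\alpha\bigr|\ll P^{1-2^{-k}+\varepsilon}\int_{\mathfrak m}|Gh|\,\mathrm d\alpha=P^{1-2^{-k}+\varepsilon}\mathscr J(m)$, which is precisely the second term.

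On $\mathfrak m\cap\mathscr M$ I would exploit the major-arc behaviour of $g$. The arcs $\mathscr M(q,a)$ are pairwise disjoint in the admissible range, so $q=q(\alpha)$ and $a=a(\alpha)$ are well defined on $\mathscr M$, and $\omega_k(q)$ is exactly the value of the multiplicative function recording the complete-sum bound $q^{-1}|S(q,a)|\ll\omega_k(q)$, where $S(q,a)=\sum_{r=1}^q e(ar^k/q)$. The key pointwise input I would establish is
\[
|g(\alpha)|\ll P\,\omega_k^{1/2}(q)\bigl(1+P^k|\alpha-a/q|\bigr)^{-1/2}\qquad(\alpha\in\mathscr M(q,a)).
\]
This comes as the geometric mean of two honest estimates: the classical major-arc approximation $g(\alpha)\approx q^{-1}S(q,a)\,v(\alpha-a/q)$, which supplies the weight $\omega_k(q)$ and the decay exponent $-1/k$, and a first-derivative (van der Corput) bound of shape $|g(\alpha)|\ll P(1+P^k|\alpha-a/q|)^{-1}$. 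Since $|g|$ is dominated by each, it is dominated by their geometric mean, and the resulting decay exponent $-(k+1)/(2k)\le-1/2$.

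With this bound the major part is finished by a single application of Hölder's inequality with exponents $(4,4,2)$. Factor the integrand (up to the front factor $P$) as $f_1f_2f_3$ with $f_1=\omega_k^{1/2}(q)(1+P^k|\alpha-a/q|)^{-1/2}|h|^{1/2}$, $f_2=|G|^{1/2}$, $f_3=|G|^{1/2}|h|^{1/2}$, so that $f_1f_2f_3=\omega_k^{1/2}(q)(1+P^k|\alpha-a/q|)^{-1/2}|G||h|$ and $|g||G||h|\ll P\,f_1f_2f_3$. Hölder then yields $\int_{\mathfrak m\cap\mathscr M}|gGh|\ll P\,\|f_1\|_4\|f_2\|_4\|f_3\|_2$, and by construction $\|f_1\|_4=\bigl(\int_{\mathfrak m\cap\mathscr M}\omega_k^2(q)(1+P^k|\alpha-a/q|)^{-2}|h|^2\bigr)^{1/4}\le\mathscr J_0^{1/4}$ (take $\beta=0$ in the supremum defining $\mathscr J_0$ and enlarge the domain to $\mathscr M$), $\|f_2\|_4\le(\int_{\mathfrak m}|G|^2)^{1/4}$, and $\|f_3\|_2\le\mathscr J(m)^{1/2}$. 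Together these give the first term, completing the proof.

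The hard part is the weighted pointwise estimate for $g$ on $\mathscr M$: one must secure simultaneously the arithmetic saving $\omega_k^{1/2}(q)$ and the analytic decay $(1+P^k|\alpha-a/q|)^{-1/2}$, since it is precisely the square of the latter that is hard-wired into the exponent $2$ of $\mathscr J_0$. The approximation alone gives only decay $-1/k$, which is too weak, so the geometric-mean step against a first-derivative bound is essential; one must also check that the error in the major-arc approximation is negligible uniformly over the (narrow) arcs and that, for the $\mathcal A$ relevant to the application, such a first-derivative estimate is indeed available. By comparison, the Weyl bound on the complement, the disjointness of the arcs, and the Hölder bookkeeping are routine.
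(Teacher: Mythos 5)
The paper offers no proof of this statement at all: it is quoted verbatim as Lemma 3.1 of Zhao \cite{ZLL2}. Your proposal, measured against what that lemma actually requires, has a genuine gap that cannot be repaired along the lines you describe: both of your key pointwise inputs for $g=g_{\mathcal{A}}$ are false for an \emph{arbitrary} subset $\mathcal{A}\subseteq(P,2P]\cap\mathbb{N}$, which is exactly the generality in which the lemma is stated. Weyl's inequality gives $\sup_{\mathfrak{m}\setminus\mathscr{M}}|f(\alpha)|\ll P^{1-2^{1-k}+\varepsilon}$ only for the complete sum $f(\alpha)=\sum_{P<x\le 2P}e(x^{k}\alpha)$; for a subset it fails badly. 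For instance, fixing a genuinely minor-arc point $\alpha_{0}$ and taking $\mathcal{A}=\{x\in(P,2P]:\{x^{k}\alpha_{0}\}<1/10\}$, equidistribution gives $|\mathcal{A}|\gg P$ and hence $|g_{\mathcal{A}}(\alpha_{0})|\gg P$. The same objection kills your major-arc step: the approximation $g(\alpha)\approx q^{-1}S(q,a)v(\alpha-a/q)$ and the first-derivative bound both require summation over a full interval, so the geometric-mean estimate $|g(\alpha)|\ll P\omega_{k}^{1/2}(q)(1+P^{k}|\alpha-a/q|)^{-1/2}$ is simply unavailable. (Even for the $\mathcal{A}$ relevant to this paper, the primes in $(U_i,2U_i]$ with $k=3$, the known minor-arc bound from Lemma \ref{lemS} is of strength $U_i^{11/12+\epsilon}$, weaker than the $P^{1-2^{-k}}=P^{7/8}$ your first step would need.)

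Two features of the statement signal the intended argument. The exponent in the second term is $1-2^{-k}$, the geometric mean of the trivial exponent $1$ and the Weyl exponent $1-2^{1-k}$, and $\mathscr{J}_{0}$ carries a supremum over a shift $\beta$ inside $h$; in your proof the supremum is never used and you would obtain the stronger exponent $1-2^{1-k}$, both of which are tells. Zhao's proof first writes $|\int_{\mathfrak{m}}gGh\,\mathrm{d}\alpha|\le\sum_{x\in\mathcal{A}}|\int_{\mathfrak{m}}e(x^{k}\alpha)Gh\,\mathrm{d}\alpha|$, extends the sum by positivity to all integers $x\in(P,2P]$, and applies Cauchy's inequality in the variable $x$; squaring and interchanging summation and integration produces the bilinear form $\int_{\mathfrak{m}}\int_{\mathfrak{m}}f(\alpha-\beta)G(\alpha)\overline{G(\beta)}h(\alpha)\overline{h(\beta)}\,\mathrm{d}\alpha\,\mathrm{d}\beta$, in which the \emph{complete} Weyl sum $f(\alpha-\beta)$ legitimately admits Weyl's inequality off $\mathscr{M}$ and the weighted bound $P\omega_{k}(q)(1+P^{k}|\alpha-\beta-a/q|)^{-1}$ on $\mathscr{M}$; a further Cauchy--Schwarz to separate $\alpha$ from $\beta$ is what yields $\mathscr{J}_{0}$ with its $\sup_{\beta}$. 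Your H\"older bookkeeping with exponents $(4,4,2)$ is arithmetically consistent with the shape of the conclusion, but it rests on pointwise estimates that do not exist; the indispensable idea you are missing is the completion of the summation variable via Cauchy--Schwarz \emph{before} any exponential-sum estimate is invoked.
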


\begin{proof}
  This lemma is Lemma 3.1 in Zhao \cite{ZLL2}.
\end{proof}

\begin{lemma}\label{lem4.4}
For $\gamma\in\mathbb{R}$, we define
\begin{equation*}
  \mathcal{L}(\gamma)=\sum_{q\leq V_{i}}\sum_{\substack{1\leq a\leq q\\ (a,q)=1}}\int_{|\alpha-\frac{a}{q}|\leq V_{i}}
  \frac{\omega_{3}^{2}(q)d^{c}(q)|\sum_{V_{i}< p\leq 2V_{i}}e(p^{3}(\alpha+\gamma))|^{2}}{1+|\alpha-\frac{a}{q}|V_{i}^{3}}\mathrm{d}\alpha,
\end{equation*}
we have uniformly for $\gamma\in\mathbb{R}$ that
\begin{equation*}
  \mathcal{L}(\gamma)\ll V_{i}^{2}N_{i}^{-1+\epsilon}.
\end{equation*}
Here $c$ is an absolute constant.
\end{lemma}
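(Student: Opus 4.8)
The plan is to open the square $|g(\alpha+\gamma)|^{2}$ and perform the summation over $a$, which turns the $a$-sum into Ramanujan sums and confines the dependence on $\gamma$ to a unimodular factor. Writing $g(\theta)=\sum_{p\sim V_i}e(p^{3}\theta)$ and substituting $\beta=\alpha-a/q$, the inner integral becomes
\begin{equation*}
\int_{|\beta|\le V_i}\frac{\bigl|g\bigl(\tfrac{a}{q}+\gamma+\beta\bigr)\bigr|^{2}}{1+|\beta|V_i^{3}}\,\mathrm{d}\beta=\sum_{p_1,p_2\sim V_i}e\!\left(\frac{a(p_1^{3}-p_2^{3})}{q}\right)e\bigl((p_1^{3}-p_2^{3})\gamma\bigr)\,I(p_1^{3}-p_2^{3}),
\end{equation*}
where $I(h)=\int_{|\beta|\le V_i}\frac{e(h\beta)}{1+|\beta|V_i^{3}}\,\mathrm{d}\beta$. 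Summing over $a$ with $(a,q)=1$ produces the Ramanujan sum $c_q(h)=\sum_{1\le a\le q,\,(a,q)=1}e(ah/q)$, so that
\begin{equation*}
\mathcal{L}(\gamma)=\sum_{q\le V_i}\omega_3^{2}(q)\,d^{c}(q)\sum_{p_1,p_2\sim V_i}c_q(p_1^{3}-p_2^{3})\,e\bigl((p_1^{3}-p_2^{3})\gamma\bigr)\,I(p_1^{3}-p_2^{3}).
\end{equation*}
Since $\gamma$ now survives only through the factor $e((p_1^{3}-p_2^{3})\gamma)$ of modulus one, taking absolute values inside the sums gives a bound independent of $\gamma$, which is exactly the required uniformity; no cancellation in $\gamma$ is available, nor is any needed.

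Next I would record the trivial estimate $|I(h)|\le\int_{|\beta|\le V_i}\frac{\mathrm{d}\beta}{1+|\beta|V_i^{3}}\ll V_i^{-3}\log V_i\ll V_i^{-3+\epsilon}$, valid for every integer $h$, and bound the Ramanujan sum by $|c_q(h)|\le(q,h)$. Writing $(q,h)=\sum_{d\mid(q,h)}\varphi(d)$ and interchanging the order of summation reduces everything to
\begin{equation*}
|\mathcal{L}(\gamma)|\ll V_i^{-3+\epsilon}\sum_{q\le V_i}\omega_3^{2}(q)\,d^{c}(q)\sum_{d\mid q}\varphi(d)\,\mathcal{N}_d,\qquad \mathcal{N}_d=\#\bigl\{(p_1,p_2):p_1,p_2\sim V_i,\ p_1^{3}\equiv p_2^{3}\!\!\pmod d\bigr\}.
\end{equation*}
The diagonal $p_1=p_2$ needs no separate treatment: it is contained in $\mathcal{N}_d$ (indeed it is its bulk), and the term $d=1$ already accounts for all pairs.

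The two remaining ingredients are a congruence count and two weighted divisor sums. For the former I would use $\mathcal{N}_d\ll V_i^{2}d^{-1+\epsilon}+V_i^{1+\epsilon}$, which follows from $\mathcal{N}_d\le\sum_{c\bmod d}\bigl(\#\{p\sim V_i:p^{3}\equiv c\pmod d\}\bigr)^{2}$ together with the fact that $x\mapsto x^{3}$ has at most $3^{\omega(d)}\ll d^{\epsilon}$ preimages modulo $d$. For the latter, the crucial input is the prime decay $\omega_3^{2}(p)=9p^{-1}$: comparison with the Euler product gives $\sum_{q\le V_i}\omega_3^{2}(q)d^{c}(q)\le\prod_{p\le V_i}\sum_{e\ge0}\omega_3^{2}(p^{e})d^{c}(p^{e})=\prod_{p\le V_i}\bigl(1+O(p^{-1})\bigr)\ll(\log V_i)^{O(1)}\ll V_i^{\epsilon}$, and consequently $\sum_{q\le V_i}\omega_3^{2}(q)d^{c}(q)\,q\le V_i\sum_{q\le V_i}\omega_3^{2}(q)d^{c}(q)\ll V_i^{1+\epsilon}$. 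Inserting $\mathcal{N}_d$ and using $\sum_{d\mid q}\varphi(d)=q$ gives $\sum_{d\mid q}\varphi(d)\mathcal{N}_d\ll V_i^{2+\epsilon}+V_i^{1+\epsilon}q$, whence
\begin{equation*}
|\mathcal{L}(\gamma)|\ll V_i^{-1+\epsilon}\sum_{q\le V_i}\omega_3^{2}(q)d^{c}(q)+V_i^{-2+\epsilon}\sum_{q\le V_i}\omega_3^{2}(q)d^{c}(q)\,q\ll V_i^{-1+\epsilon}.
\end{equation*}
Since $N_i\asymp V_i^{3}$, this is $\mathcal{L}(\gamma)\ll V_i^{2}N_i^{-1+\epsilon}$, as asserted.

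I expect the genuine obstacle to be the \emph{off-diagonal} part ($h=p_1^{3}-p_2^{3}\neq0$). There, replacing $c_q(h)$ by the trivial $\varphi(q)$ would force the $q$-sum up to size $V_i^{1+\epsilon}$ and yield only the useless bound $\mathcal{L}(\gamma)\ll V_i^{\epsilon}$; it is therefore essential both to retain the cancellation in the Ramanujan sum via $|c_q(h)|\le(q,h)$ and to evaluate $\sum_{q\le V_i}\omega_3^{2}(q)d^{c}(q)$ correctly as $V_i^{\epsilon}$ using the $9/p$ decay, rather than through the wasteful pointwise bound $\omega_3^{2}(q)\ll q^{-2/3+\epsilon}$, which would lose a factor of $V_i^{1/3}$. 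The congruence count $\mathcal{N}_d$ is routine but must be carried out uniformly in $d\le V_i$.
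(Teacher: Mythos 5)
Your argument is correct, but it is a genuinely different route from the paper's: the paper disposes of this lemma in one line by invoking Lemma 2.2 of Zhao \cite{ZLL2} (a general mean-value estimate for such weighted major-arc integrals, specialized to $k=3$, $P=Q=V_i$), whereas you give a direct, self-contained proof. Your computation checks out at every step: opening the square and summing over $a$ does produce the Ramanujan sum $c_q(p_1^3-p_2^3)$ and isolates $\gamma$ in a unimodular factor, so uniformity is automatic; the trivial bound $I(h)\ll V_i^{-3}\log V_i$ is what the denominator yields over $|\beta|\le V_i$; the bound $|c_q(h)|\le (q,h)$ holds for all $h$ including $h=0$ (where $c_q(0)=\varphi(q)\le q$), so the diagonal really is absorbed; since every $p\sim V_i$ is coprime to every $d\le V_i$, the cube map restricted to the relevant residues has at most $3^{\omega(d)}$ preimages, giving $\mathcal{N}_d\ll V_i^2d^{-1+\epsilon}+V_i^{1+\epsilon}$; and the Euler-product evaluation $\sum_{q\le V_i}\omega_3^2(q)d^c(q)\ll (\log V_i)^{O(1)}$ is valid because $\omega_3^2(p)=9p^{-1}$ and the higher prime-power terms decay. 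The two approaches buy different things: the citation keeps the paper short and inherits whatever generality Zhao's lemma has (arbitrary $k$, more general weight placement), while your proof makes transparent exactly where the saving of $V_i^{-1}$ over the trivial bound comes from --- the combination of the $q^{-1}$ decay of $\omega_3^2$ on primes with the gcd cancellation in the Ramanujan sum --- and correctly identifies that replacing $c_q(h)$ by $\varphi(q)$, or $\omega_3^2(q)$ by its pointwise bound $q^{-2/3+\epsilon}$, would destroy the estimate.
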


\begin{proof}
 We can deduce this lemma from Lemma $2.2$ with $k=3$ and $P=Q=V_{i}$ in Zhao \cite{ZLL2}.
\end{proof}

\begin{lemma}\label{lemS39}
We have
\begin{equation*}
  \int_{\mathfrak{m}_{i}}\left|S_{3}^{3}(\alpha_{i},U_{i})S_{3}^{9}(\alpha_{i},V_{i})\right|\mathrm{d}\alpha_{i}\ll N_{i}^{\frac{103}{36}+\epsilon}.
\end{equation*}
\end{lemma}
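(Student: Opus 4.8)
The plan is to estimate the single integral over $\mathfrak{m}_{i}$ by a direct application of Zhao's minor-arc inequality (Lemma \ref{lem4.3}) with $k=3$, exploiting the freedom to distribute the twelve factors among the roles $g,G,h$ asymmetrically. The decisive choice is to let the distinguished cubic sum $g$ be a single copy of the $U_{i}$-sum and to let $h$ be a single copy of the $V_{i}$-sum. Precisely, I would take $g(\alpha_{i})=\sum_{p\sim U_{i}}e(p^{3}\alpha_{i})$ (so $P=U_{i}$ and $\mathcal{A}=\{p:p\sim U_{i}\}\subseteq(P,2P]$), $h(\alpha_{i})=S_{3}(\alpha_{i},V_{i})$, and let $G(\alpha_{i})$ be the product of the remaining two $U_{i}$-sums and eight $V_{i}$-sums, multiplied by a unimodular factor so that $g\,G\,h$ equals the non-negative integrand pointwise; thus $|G|=|S_{3}^{2}(\alpha_{i},U_{i})S_{3}^{8}(\alpha_{i},V_{i})|$. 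Since Lemma \ref{lem4.3} requires an unweighted $g$, the $(\log p)$-weight on the distinguished factor is first removed by partial summation, which costs only a factor $N_{i}^{\epsilon}$ because the bound in Lemma \ref{lem4.3} is uniform over all subsets $\mathcal{A}\subseteq(P,2P]$. This reduces matters to estimating $\mathscr{J}_{0}$, the mean value $\mathscr{J}(m)=\int_{\mathfrak{m}_{i}}|Gh|$, and $\int_{\mathfrak{m}_{i}}|G|^{2}$.

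For $\mathscr{J}_{0}$, note that $h=S_{3}(\cdot,V_{i})$ and that the arcs in $\mathscr{M}$ have modulus of order $U_{i}\asymp V_{i}\asymp N_{i}^{1/3}$; an immediate variant of Lemma \ref{lem4.4} (whose underlying estimate is insensitive to replacing $V_{i}$ by the comparable $U_{i}$ in the arc parameters, the narrower range of $q$ and the squared denominator only helping) then gives $\mathscr{J}_{0}\ll V_{i}^{2}N_{i}^{-1+\epsilon}=N_{i}^{-1/3+\epsilon}$. The two mean values are controlled by combining the minor-arc bound $\sup_{\alpha_{i}\in\mathfrak{m}_{i}}|S_{3}(\alpha_{i},U_{i})|\ll N_{i}^{11/36+\epsilon}$ of Lemma \ref{lemfS3} with Hua-type moment bounds for the $V_{i}$-sum (valid with $\log$-weights up to $N_{i}^{\epsilon}$), namely $\int_{0}^{1}|S_{3}(\alpha_{i},V_{i})|^{8}\mathrm{d}\alpha_{i}\ll V_{i}^{5+\epsilon}$ and $\int_{0}^{1}|S_{3}(\alpha_{i},V_{i})|^{16}\mathrm{d}\alpha_{i}\ll V_{i}^{12+\epsilon}$. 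Interpolating gives $\int_{0}^{1}|S_{3}(\alpha_{i},V_{i})|^{10}\mathrm{d}\alpha_{i}\ll V_{i}^{27/4+\epsilon}$, and Cauchy--Schwarz then yields $\int_{0}^{1}|S_{3}(\alpha_{i},V_{i})|^{9}\mathrm{d}\alpha_{i}\ll V_{i}^{47/8+\epsilon}=N_{i}^{47/24+\epsilon}$. Extending the integration to $[0,1]$ and pulling out the $U_{i}$-sup, I obtain
\begin{equation*}
\mathscr{J}(m)\le\Big(\sup_{\alpha_{i}\in\mathfrak{m}_{i}}|S_{3}(\alpha_{i},U_{i})|\Big)^{2}\int_{0}^{1}|S_{3}(\alpha_{i},V_{i})|^{9}\mathrm{d}\alpha_{i}\ll N_{i}^{22/36}\cdot N_{i}^{47/24+\epsilon}=N_{i}^{185/72+\epsilon},
\end{equation*}
and likewise $\int_{\mathfrak{m}_{i}}|G|^{2}\le(\sup_{\alpha_{i}\in\mathfrak{m}_{i}}|S_{3}(\alpha_{i},U_{i})|)^{4}\int_{0}^{1}|S_{3}(\alpha_{i},V_{i})|^{16}\mathrm{d}\alpha_{i}\ll N_{i}^{188/36+\epsilon}$.

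It remains to feed these into the two terms of Lemma \ref{lem4.3} with $P=U_{i}$, so that $P^{1-2^{-3}}=U_{i}^{7/8}=N_{i}^{7/24}$. The first term contributes $U_{i}\mathscr{J}_{0}^{1/4}(\int_{\mathfrak{m}_{i}}|G|^{2})^{1/4}\mathscr{J}(m)^{1/2}\ll N_{i}^{409/144+\epsilon}$, which is strictly below the target since $409/144<412/144=103/36$; the second term contributes $N_{i}^{7/24}\mathscr{J}(m)\ll N_{i}^{7/24+185/72+\epsilon}=N_{i}^{103/36+\epsilon}$, matching the desired exponent exactly. The crux, and the only genuinely tight step, is this second term. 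The whole estimate is calibrated by the asymmetric split: keeping $g$ a $U_{i}$-sum and $h$ a $V_{i}$-sum leaves the odd ninth power $|S_{3}(\cdot,V_{i})|^{9}$ together with only two $U_{i}$-factors inside $\mathscr{J}(m)$, producing the exponent $185/72$; the more symmetric choice $g=h=S_{3}(\cdot,V_{i})$ would instead leave $\int_{\mathfrak{m}_{i}}|S_{3}(\cdot,U_{i})|^{3}|S_{3}(\cdot,V_{i})|^{8}\ll N_{i}^{93/36}$ in $\mathscr{J}(m)$ and overshoot the target by a factor $N_{i}^{1/72}$. Thus the main work lies in selecting this decomposition and in extracting the sharp ninth-moment bound $V_{i}^{47/8}$ from Hua's inequality, so that the second term closes at precisely $103/36$.
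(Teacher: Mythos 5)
Your overall framework coincides with the paper's: you apply Lemma \ref{lem4.3} with the identical asymmetric split $g=S_{3}(\cdot,U_{i})$, $h=S_{3}(\cdot,V_{i})$, $|G|=|S_{3}^{2}(\cdot,U_{i})S_{3}^{8}(\cdot,V_{i})|$, and you bound $\mathscr{J}_{0}\ll N_{i}^{-1/3+\epsilon}$ via Lemma \ref{lem4.4} just as the paper does. However, your treatment of $\mathscr{J}(m)$ and of $\int_{\mathfrak{m}_{i}}|G|^{2}$ contains a fatal error. The moment bounds you invoke, $\int_{0}^{1}|S_{3}(\alpha_{i},V_{i})|^{16}\,\mathrm{d}\alpha_{i}\ll V_{i}^{12+\epsilon}$ and (after interpolation and Cauchy--Schwarz) $\int_{0}^{1}|S_{3}(\alpha_{i},V_{i})|^{9}\,\mathrm{d}\alpha_{i}\ll V_{i}^{47/8+\epsilon}$, are false: on the single arc $|\alpha_{i}|\le cV_{i}^{-3}$ one has $|S_{3}(\alpha_{i},V_{i})|\gg V_{i}$, so the sixteenth moment over $[0,1]$ is $\gg V_{i}^{13}$ and the ninth moment is $\gg V_{i}^{6}=N_{i}^{2}$, which already exceeds your claimed $V_{i}^{47/8}\asymp N_{i}^{47/24}$. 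Hua's inequality for cubes stops at the eighth moment; beyond it the major-arc contribution, not the diagonal, dominates. With the true ninth moment $N_{i}^{2+\epsilon}$ your decisive second term becomes $N_{i}^{7/24}\cdot N_{i}^{22/36+2+\epsilon}=N_{i}^{209/72+\epsilon}$, which overshoots the target $N_{i}^{206/72}=N_{i}^{103/36}$, and the first term fails for the same reason. The step where you extend the ninth and sixteenth powers of $S_{3}(\cdot,V_{i})$ to the whole unit interval is exactly where the argument breaks.

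The paper avoids this by never taking a moment of $S_{3}(\cdot,V_{i})$ of order above eight over $[0,1]$. Writing $\mathscr{J}(t)=\int_{\mathfrak{m}_{i}}|S_{3}^{t}(\cdot,U_{i})S_{3}^{9}(\cdot,V_{i})|$, it keeps $\mathscr{J}(m)=\mathscr{J}(2)$ on the minor arcs and relates it back to the quantity being estimated via Cauchy--Schwarz,
\begin{equation*}
\mathscr{J}(2)\le \mathscr{J}(3)^{\frac{1}{2}}\Bigl(\int_{\mathfrak{m}_{i}}\bigl|S_{3}(\alpha_{i},U_{i})S_{3}^{9}(\alpha_{i},V_{i})\bigr|\,\mathrm{d}\alpha_{i}\Bigr)^{\frac{1}{2}}
\ll N_{i}^{\frac{41}{36}+\epsilon}\mathscr{J}(3)^{\frac{1}{2}},
\end{equation*}
where the second factor is handled by pulling out the minor-arc suprema of $S_{3}(\cdot,U_{i})$ and of one copy of $S_{3}(\cdot,V_{i})$ from Lemma \ref{lemfS3} and then using only Hua's eighth-moment bound $\int_{0}^{1}|S_{3}^{8}(\cdot,V_{i})|\ll N_{i}^{5/3+\epsilon}$; similarly $\int_{\mathfrak{m}_{i}}|S_{3}^{4}(\cdot,U_{i})S_{3}^{16}(\cdot,V_{i})|\ll N_{i}^{22/9+\epsilon}\mathscr{J}(3)$. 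Feeding these self-referential bounds into Lemma \ref{lem4.3} gives $\mathscr{J}(3)\ll N_{i}^{103/72+\epsilon}\mathscr{J}(3)^{1/2}$, which closes the argument. To repair your proof, replace your two absolute moment estimates by these relative ones; the minor-arc restriction is essential and cannot be discarded.
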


\begin{proof}
Write
\begin{equation*}
\mathscr{J}(t)=\int_{\mathfrak{m}_{i}}\left|S_{3}^{t}(\alpha_{i},U_{i})S_{3}^{9}(\alpha_{i},V_{i})\right|\mathrm{d}\alpha_{i}, \ \ 1\leq t\leq 3.
\end{equation*}
By Lemma \ref{lem4.3} with
$$g(\alpha)=S_{3}(\alpha_{i},U_{i}),\ \ h(\alpha)=S_{3}(\alpha_{i},V_{i}),\ \ G(\alpha)=|S_{3}^{2}(\alpha_{i},U_{i})S_{3}^{8}(\alpha_{i},V_{i})|,$$
we obtain
\begin{equation}\label{H1H2}
\begin{split}
\mathscr{J}(3)=N_{i}^{\frac{1}{3}}\mathscr{J}_{0}^{\frac{1}{4}}\left(\int_{\mathfrak{m}_{i}}|S_{3}^{4}(\alpha_{i},U_{i})S_{3}^{16}(\alpha_{i},V_{i})|
\mathrm{d}\alpha_{i}\right)^{\frac{1}{4}}\mathscr{J}^{\frac{1}{2}}(2)+N_{i}^{\frac{7}{24}+\epsilon}\mathscr{J}(2).
\end{split}
\end{equation}
Here
\begin{equation*}
  \mathscr{J}_{0}=\sup_{\beta\in[0,1)}\sum_{q\leq V_{i}^{\frac{3}{4}}}\sum_{\substack{1\leq a\leq q\\ (a,q)=1}}
  \int_{\mathscr{M}(q,a)}\frac{\omega_{3}^{2}(q)|h^{2}(\alpha+\beta)|}{(1+V_{i}^{3}|\alpha-\frac{a}{q}|)^{2}}\mathrm{d}\alpha
\end{equation*}
and
$$\mathscr{M}(q,a)=\left\{\alpha:|q\alpha-a|\leq V_{i}^{-\frac{9}{4}}\right\}.$$
We apply Lemma \ref{lem4.4} and get
\begin{equation}\label{c.5}
  \mathscr{J}_{0}\ll \mathcal{L}(\gamma)\ll V_{i}^{2}N_{i}^{-1+\epsilon}\ll N_{i}^{-\frac{1}{3}+\epsilon}.
\end{equation}
For $\mathscr{I}(2)$, by Lemma \ref{lemfS3}, Cauchy's inequality and Hua's inequality, we obtain
\begin{equation}\label{c.6}
\begin{split}
  \mathscr{J}(2)&\leq\mathscr{J}^{\frac{1}{2}}(3)\left(\int_{\mathfrak{m}_{i}}|S_{3}(\alpha_{i},U_{i})S_{3}^{9}(\alpha_{i},V_{i})|\mathrm{d}\alpha_{i}\right)^{\frac{1}{2}}\\
  &\ll N_{i}^{\frac{11}{36}+\epsilon}\mathscr{J}^{\frac{1}{2}}(3)\left(\int_{\mathfrak{m}_{2}}|S_{3}^{8}(\alpha_{i},V_{i})|\mathrm{d}\alpha_{i}\right)^{\frac{1}{2}}\\
  &\ll N_{i}^{\frac{41}{36}+\epsilon}\mathscr{J}^{\frac{1}{2}}(3).
\end{split}
\end{equation}
Also we apply Lemma \ref{lemfS3} and get
\begin{equation}\label{c}
\begin{split}
  \int_{\mathfrak{m}_{i}}|S_{3}^{4}(\alpha_{i},U_{i})S_{3}^{16}(\alpha_{i},V_{i})|\mathrm{d}\alpha_{i}
  \ll N^{\frac{22}{9}+\epsilon}\mathscr{J}(3).
\end{split}
\end{equation}
Inserting \eqref{c.5}, \eqref{c.6} and \eqref{c} into \eqref{H1H2}, we have
\begin{equation*}
  \mathscr{J}(3)\ll N^{\frac{103}{72}+\epsilon}\mathscr{J}^{\frac{1}{2}}(3),
\end{equation*}
from which we can get this lemma.
\end{proof}

\begin{lemma}\label{lemS26}
We have
\begin{equation*}
  \int_{0}^{1}|S_{3}^{2}(\alpha_{2},U_{i})S_{3}^{6}(\alpha_{i},W_{i})|\mathrm{d}\alpha_{i}\ll N_{i}^{\frac{4}{3}+\epsilon}.
\end{equation*}
\end{lemma}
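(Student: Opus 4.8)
The plan is to exploit the smallness of $W_{i}$ relative to $U_{i}$, so that the cube-sum over $W_{i}$ can be discarded trivially while only a cheap mean square of the cube-sum over $U_{i}$ is kept. First I would record the orders of magnitude involved. Since $U_{i}=\tfrac12\bigl((1-\eta)N_{i}\bigr)^{1/3}\asymp N_{i}^{1/3}$ and $W_{i}=U_{i}^{5/18}$ by \eqref{UVW1}, we have $W_{i}\asymp N_{i}^{5/54}$, and in particular $W_{i}^{6}=U_{i}^{5/3}\asymp N_{i}^{5/9}$. This is the key numerical fact: the sixth power of $W_{i}$ is already far below the target exponent $\tfrac43$.

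Next, rather than treating the two exponential sums on an equal footing, I would pull the supremum of $|S_{3}(\alpha_{i},W_{i})|^{6}$ out of the integral and retain $S_{3}(\alpha_{i},U_{i})$ in mean square, which gives
\begin{equation*}
\int_{0}^{1}\bigl|S_{3}^{2}(\alpha_{i},U_{i})S_{3}^{6}(\alpha_{i},W_{i})\bigr|\,\mathrm{d}\alpha_{i}
\le\Bigl(\sup_{\alpha_{i}\in[0,1]}|S_{3}(\alpha_{i},W_{i})|\Bigr)^{6}\int_{0}^{1}|S_{3}(\alpha_{i},U_{i})|^{2}\,\mathrm{d}\alpha_{i}.
\end{equation*}
The first factor is controlled by the trivial bound $|S_{3}(\alpha_{i},W_{i})|\le\sum_{p\sim W_{i}}\log p\ll W_{i}$. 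For the remaining mean square I would use orthogonality to evaluate it as a purely diagonal sum, namely
\begin{equation*}
\int_{0}^{1}|S_{3}(\alpha_{i},U_{i})|^{2}\,\mathrm{d}\alpha_{i}
=\sum_{\substack{p_{1},p_{2}\sim U_{i}\\ p_{1}^{3}=p_{2}^{3}}}(\log p_{1})(\log p_{2})
=\sum_{p\sim U_{i}}(\log p)^{2}\ll U_{i}\log U_{i},
\end{equation*}
since $p_{1}^{3}=p_{2}^{3}$ forces $p_{1}=p_{2}$. Combining the two estimates yields a bound $\ll W_{i}^{6}\,U_{i}\log U_{i}\asymp N_{i}^{5/9}\cdot N_{i}^{1/3+\epsilon}=N_{i}^{8/9+\epsilon}$, which is comfortably inside the claimed $N_{i}^{4/3+\epsilon}$.

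The only steps that genuinely need verification are the diagonal evaluation of the mean square and the exponent bookkeeping $W_{i}^{6}U_{i}=U_{i}^{8/3}\asymp N_{i}^{8/9}$; there is no real obstacle here, and the margin $\tfrac89<\tfrac43$ is wide, so the crude trivial estimate for the $W_{i}$-sum is more than adequate. If a more uniform treatment were desired, one could instead bound $\int_{0}^{1}|S_{3}(\alpha_{i},W_{i})|^{6}\,\mathrm{d}\alpha_{i}$ by a Hua-type sixth-moment mean value for the cubic sum over $W_{i}$ and combine it via Cauchy--Schwarz with the mean square of $S_{3}(\alpha_{i},U_{i})$; but because $W_{i}$ is so small relative to $N_{i}$, the simple supremum argument above already suffices and is the cleanest route.
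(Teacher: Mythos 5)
Your proof is correct, and it takes a genuinely different route from the paper, which does not prove the estimate at all but simply quotes it from (5.6) and (5.9) of Br\"{u}dern \cite{BJ}. Your argument --- pulling out $\sup_{\alpha_i}|S_3(\alpha_i,W_i)|^6\ll W_i^6$ and evaluating $\int_0^1|S_3(\alpha_i,U_i)|^2\,\mathrm{d}\alpha_i=\sum_{p\sim U_i}(\log p)^2\ll U_i\log U_i$ by orthogonality --- is airtight, and the bookkeeping $W_i^6U_i=U_i^{8/3}\asymp N_i^{8/9}$ is right, so you in fact obtain the stronger bound $N_i^{8/9+\epsilon}$. It is worth adding that for the ranges actually fixed in \eqref{UVW1}, where $W_i=U_i^{5/18}\asymp N_i^{5/54}$, even the completely trivial pointwise estimate gives $\int_0^1|S_3^2(\alpha_i,U_i)S_3^6(\alpha_i,W_i)|\,\mathrm{d}\alpha_i\ll U_i^2W_i^6\asymp N_i^{11/9}$, which already beats the stated $N_i^{4/3+\epsilon}$; the exponent $4/3$ and the appeal to Br\"{u}dern are the natural ones only when the sixth-power sum runs over a full-sized range $\asymp N_i^{1/3}$, in which case $N_i^{4/3}$ is essentially the diagonal contribution $U_iW_i^3$ and the mean value is genuinely deep. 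So the comparison is: the paper's citation covers the harder equal-range situation and keeps the exposition short, while your argument is self-contained, elementary, and quantitatively sharper in the regime the paper actually works in.
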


\begin{proof}
  We can deduce this lemma from (5.6) and (5.9) in Br\"{u}dern \cite{BJ}.
\end{proof}

\begin{lemma}\label{lemlambda}
Let $\operatorname{meas}(\mathscr{E}_{\lambda})$ denote the Lebesgue measure of $\mathscr{E}_{\lambda}$. We have
\begin{equation*}
\operatorname{meas}\left(\mathscr{E}_{\lambda}\right) \ll N_{i}^{-E(\lambda)}
\end{equation*}
with $E(0.87045114)>\frac{13}{18}+10^{-10}$.
\end{lemma}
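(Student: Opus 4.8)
The plan is to reduce the planar measure to a one‑dimensional distribution function and then to estimate the latter by an exponential‑moment (large‑deviation) argument adapted to the doubling map $\beta\mapsto 2\beta$.

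Since $G(\alpha_{1}+\alpha_{2})$ depends on $(\alpha_{1},\alpha_{2})$ only through the sum $\alpha_{1}+\alpha_{2}\pmod 1$, for each fixed $\alpha_{1}$ the translation invariance of Lebesgue measure on $\mathbb{R}/\mathbb{Z}$ shows that the $\alpha_{2}$-section of $\mathscr{E}_{\lambda}$ has one‑dimensional measure equal to $\operatorname{meas}\{\beta\in[0,1):|G(\beta)|\geq\lambda L\}$, independently of $\alpha_{1}$. Integrating in $\alpha_{1}$ and applying Fubini I would obtain
$$\operatorname{meas}(\mathscr{E}_{\lambda})=\operatorname{meas}\{\beta\in[0,1):|G(\beta)|\geq\lambda L\},$$
so it suffices to bound the one‑dimensional distribution function of $|G|$.

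To estimate the latter I would use the Chernoff method. First I handle the modulus by a net: if $|G(\beta)|\geq\lambda L$ then $\operatorname{Re}\bigl(e(-\theta)G(\beta)\bigr)\geq\lambda L$ with $\theta=\tfrac{1}{2\pi}\arg G(\beta)$, so after replacing $\theta$ by the nearest point of a net of $\ll L^{2}$ phases (losing only $o(L)$ in the threshold, which costs a harmless $N_{i}^{o(1)}$) it is enough to bound, uniformly in $\theta$ and for each $t>0$,
$$e^{-t(\lambda-o(1))L}\int_{0}^{1}\exp\Bigl(t\sum_{4\leq v\leq L}\cos 2\pi(2^{v}\beta-\theta)\Bigr)\,\mathrm{d}\beta.$$
Expanding every factor through $e^{t\cos 2\pi\phi}=\sum_{m}I_{m}(t)e(m\phi)$ with $\phi=2^{v}\beta-\theta$, and integrating term by term, orthogonality collapses the integral to the diagonal $\sum_{v}m_{v}2^{v}=0$: the all‑zero term contributes $I_{0}(t)^{L-3}$, while the nontrivial ``carry'' solutions forced by the relations $2\cdot 2^{v}=2^{v+1}$ supply the correlation corrections. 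The growth rate of this sum in $L$ is governed by a quantity $\rho(t)$ attached to the doubling map, computable recursively via the self‑similarity that strips the lowest frequency under $\beta\mapsto 2\beta$; optimizing over $t$ then yields
$$\operatorname{meas}\{|G|\geq\lambda L\}\ll N_{i}^{-E(\lambda)},\qquad E(\lambda)=\frac{1}{\log 2}\sup_{t>0}\bigl(\lambda t-\log\rho(t)\bigr),$$
since $2^{L}\asymp N_{i}/\log N_{i}$. This explicit machinery is exactly that developed by Liu--Liu--Wang and Zhao, which I would invoke; it then remains to verify, with the help of a computer, that $E(0.87045114)>\tfrac{13}{18}+10^{-10}$.

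The main obstacle is twofold. Analytically, the delicate step is to pin down the correlation factor $\rho(t)$ sharply, since a crude estimate of the carry terms would depress $E(\lambda)$ below the required threshold, so the self‑similar recursion must be tracked precisely. Numerically, the margin in $E(0.87045114)>\tfrac{13}{18}+10^{-10}$ is minute, so the optimization in $t$ and the evaluation of $\rho(t)$ must be carried out with rigorous, rather than merely floating‑point, error control. It is precisely this fine tuning that fixes the new value $\lambda=0.87045114$ so that the exponent just clears $\tfrac{13}{18}$, as the minor‑arc bookkeeping for $k=48$ demands.
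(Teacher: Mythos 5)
Your overall strategy --- reduce $\operatorname{meas}(\mathscr{E}_{\lambda})$ to the one-dimensional measure of $\{\beta\in[0,1):|G(\beta)|\geq\lambda L\}$ via the periodicity of $G$ and Fubini, then bound that measure by an exponential-moment (large-deviation) argument exploiting the self-similarity of the doubling map --- is exactly the route underlying the paper's proof. The paper, however, compresses the entire second step into a single citation: Lemma 2.3 of Liu and L\"{u} \cite{LL2004}, applied with the explicit parameters $\xi=1.16$ and $h=28$. Your Fubini reduction is correct and is the standard way the two-variable set is handled (cf. Kong and Liu \cite{KL}); the phase-net linearization of $|G|$ is also sound, costing only $o(L)$ in the threshold and a factor $N_i^{o(1)}$ from the union bound.

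The genuine gap is that the entire nontrivial content of the lemma is the numerical inequality $E(0.87045114)>\frac{13}{18}+10^{-10}$, and your proposal does not establish it. You define $E(\lambda)=\frac{1}{\log 2}\sup_{t>0}\bigl(\lambda t-\log\rho(t)\bigr)$ in terms of a correlation quantity $\rho(t)$ that is never pinned down, and you yourself note that a crude treatment of the carry terms would push $E$ below the required threshold. Moreover, your Bessel-expansion formalization is not identical to the Liu--L\"{u} set-up, which estimates $\int_{0}^{1}\exp\bigl(\xi\bigl|\sum_{1\leq v\leq h}e(2^{v}\alpha)\bigr|\bigr)\,\mathrm{d}\alpha$ over blocks of length $h$ and optimizes over $(\xi,h)$; so even granting that some exponent of your form exists, there is no guarantee it coincides with the $E(\lambda)$ for which the claimed inequality has actually been verified --- the margin here is $10^{-10}$, so ``morally the same constant'' is not enough. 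To close the proof you must either carry out the $\rho(t)$ computation with rigorous (not floating-point) error control near the optimal $t$, or do what the paper does and quote the established lemma with $\xi=1.16$, $h=28$.
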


\begin{proof}
  Taking $\xi=1.16$ and $h=28$ in Lemma 2.3 in Liu and L\"{u}\cite{LL2004}, we can get this lemma.
\end{proof}

\begin{lemma}\label{lemSSS}
We have
\begin{equation}\label{SG}
  \iint\limits_{[0,1]^{2}}|f^{2}(\alpha_{1},N_{1})f^{2}(\alpha_{2},N_{2})G^{4}(\alpha_{1}+\alpha_{2})|\mathrm{d}\alpha_{1}\mathrm{d}\alpha_{2}\leq 305.8869N_{1}N_{2}L^{4},
\end{equation}
\begin{equation}\label{SSW224}
\int_{0}^{1}\left|S_{3}^{2}(\alpha_{i},U_{i})S_{3}^{2}(\alpha_{i},V_{i})S_{3}^{4}(\alpha_{i},W_{i})\right| \mathrm{d} \alpha_{i} \leq 12.677988 U_{i}^{-1}V_{i}^{2}W_{i}^{4}.
\end{equation}
\end{lemma}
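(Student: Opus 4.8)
Since each factor sits inside an absolute value, $|f(\alpha_i,N_i)|^2$, $|G|^4$ and $|S_3|^2$ are nonnegative exponential sums whose Fourier coefficients are weighted representation counts; by orthogonality both integrals are therefore \emph{exactly} weighted solution counts of diophantine equations, which I would then estimate arithmetically. The two bounds are independent, so I would prove them separately.

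For \eqref{SG} the plan is to decouple the two variables through $G$. Writing $G^2(\alpha_1+\alpha_2)=\sum_m r(m)e(m\alpha_1)e(m\alpha_2)$ with $r(m)=\#\{(v,v'):2^v+2^{v'}=m,\ 4\le v,v'\le L\}$ and integrating in $\alpha_1,\alpha_2$ gives
$$\iint_{[0,1]^2}|f(\alpha_1,N_1)|^2|f(\alpha_2,N_2)|^2|G(\alpha_1+\alpha_2)|^4\,\mathrm{d}\alpha_1\mathrm{d}\alpha_2=\sum_{m,m'}r(m)r(m')\,J_1(m-m')\,J_2(m-m'),$$
where $J_i(h)=\int_0^1|f(\alpha,N_i)|^2e(h\alpha)\,\mathrm{d}\alpha=\sum_{p,\,p+h\le N_i,\ p+h\ \mathrm{prime}}(\log p)\log(p+h)\ge 0$. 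I would split off the diagonal $m=m'$: there $J_i(0)=\sum_{p\le N_i}(\log p)^2\le(1+o(1))N_i\log N_i$, $\sum_m r(m)^2=\int_0^1|G|^4\,\mathrm{d}\alpha=(L-3)(2L-7)$, and $\log N_i=(1+o(1))L\log 2$ from \eqref{PQL}, giving a diagonal contribution of size $(2(\log2)^2+o(1))N_1N_2L^4$. For $m\ne m'$ the shift $h=m-m'$ is a nonzero even integer, so a Brun--Titchmarsh/Selberg upper-bound sieve gives $J_i(h)\le K\,N_i\prod_{p\mid h,\,p>2}\frac{p-1}{p-2}(1+o(1))$ for an explicit $K$, bounding the off-diagonal by $K^2N_1N_2\sum_{m\ne m'}r(m)r(m')\prod_{p\mid m-m',\,p>2}\big(\tfrac{p-1}{p-2}\big)^2$. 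The hard part will be this last step: evaluating the singular series averaged over differences of sums of two powers of $2$ as a convergent Euler product, and tracking it together with the sieve constant $K$ precisely enough that diagonal plus off-diagonal land under $305.8869\,N_1N_2L^4$.

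For \eqref{SSW224} orthogonality turns the integral into the weighted number of solutions of $p^3+q^3+s_1^3+s_2^3=p'^3+q'^3+s_3^3+s_4^3$ with $p,p'\sim U_i$, $q,q'\sim V_i$, $s_1,\dots,s_4\sim W_i$, and I would apply the Hardy--Littlewood method to this cubic system. On the major arcs the main term factors as $\mathfrak{S}\cdot\mathfrak{I}$; splitting the frequency line at the scales $U_i^{-3},V_i^{-3},W_i^{-3}$ one computes $\mathfrak{I}\asymp U_i^{-1}V_i^2W_i^4$, while $\mathfrak{S}$ is a convergent product that I would bound explicitly to produce the constant $12.677988$. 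On the minor arcs I would combine the pointwise Weyl-type estimate $\max_{\mathfrak m_i}|S_3(\alpha,U_i)|\ll N_i^{11/36+\epsilon}$ of Lemma \ref{lemfS3} with the cubic mean-value bound of Lemma \ref{lemS26}, Hua's inequality, and the elementary identity $\int_0^1|S_3(\alpha,X)|^2\,\mathrm{d}\alpha=\sum_{p\sim X}(\log p)^2$, arranged through a Hölder split, to control the remaining range. The crux here is precisely this minor-arc control: one must show the off-major-arc contribution is genuinely of smaller order than $U_i^{-1}V_i^2W_i^4$, since it is this that lets the explicit major-arc main term serve as the final upper bound.
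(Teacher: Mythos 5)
The paper does not actually prove this lemma: \eqref{SG} is imported from the proof of Lemma 2.3 of Kong and Liu \cite{KL} and \eqref{SSW224} from Lemma 3.6 of \cite{HLDM}, so your attempt has to be measured against those arguments. Your plan for \eqref{SG} is the standard one and agrees in outline with the cited proof: expand $G^{2}(\alpha_{1}+\alpha_{2})$ to decouple the variables, reduce by orthogonality to the correlation sums $J_{i}(h)$, bound the diagonal $h=0$ trivially and the terms $h\neq 0$ by an explicit upper-bound sieve, then average the resulting singular series over $h=2^{v_{1}}+2^{v_{2}}-2^{v_{3}}-2^{v_{4}}$. My only reservation is that you defer exactly the step that carries all the content of the statement, namely the explicit sieve constant and the numerical evaluation of $\sum_{m\neq m'}r(m)r(m')\prod_{p\mid m-m',\,p>2}\bigl(\tfrac{p-1}{p-2}\bigr)^{2}$, so the constant $305.8869$ is asserted rather than established; still, the route is the right one.

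For \eqref{SSW224} there is a genuine gap: a Hardy--Littlewood dissection of the underlying equation cannot deliver this bound. The mean value counts solutions of an equation in only eight prime cubes, four of which lie in the very short range $s\sim W_{i}=U_{i}^{5/18}$, and the target main term is $U_{i}^{-1}V_{i}^{2}W_{i}^{4}\asymp N_{i}^{19/27}$. Nothing assembled from Lemma \ref{lemfS3}, Lemma \ref{lemS26}, Hua's inequality and H\"{o}lder brings the minor arcs below this: for example $\sup_{\mathfrak{m}_{i}}|S_{3}(\alpha_{i},U_{i})|^{2}\ll N_{i}^{11/18+\epsilon}$ multiplied by $\int_{0}^{1}|S_{3}^{2}(\alpha_{i},V_{i})S_{3}^{4}(\alpha_{i},W_{i})|\,\mathrm{d}\alpha_{i}\gg V_{i}W_{i}^{2}\asymp N_{i}^{14/27}$ already overshoots by a positive power of $N_{i}$, and other H\"{o}lder splits fare no better. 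The proofs in the literature (including the one cited here) are elementary in structure rather than circle-method asymptotics: since $|q_{1}^{3}-q_{2}^{3}+s_{1}^{3}+s_{2}^{3}-s_{3}^{3}-s_{4}^{3}|\le 7V_{i}^{3}+14W_{i}^{3}<8V_{i}^{3}=\eta N_{i}$, which is much smaller than $U_{i}^{3}$, the equation forces $|p_{1}-p_{2}|\le 8V_{i}^{3}/(3U_{i}^{2})\asymp\eta U_{i}$, i.e.\ $p_{1}$ is confined to a short interval about $p_{2}$; one then controls the remaining representation count by an explicit sieve bound of Br\"{u}dern's type. Without this range-disparity argument the claimed constant $12.677988$ --- indeed even the claimed order of magnitude as an upper bound --- is out of reach by your method.
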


\begin{proof}
From the proof of Lemma 2.3 in Kong and Liu \cite[p. 206]{KL}, we can get \eqref{SG}. The estimate \eqref{SSW224} can be found in Lemma 3.6 of the authors \cite{HLDM}.
\end{proof}

\section{Proof of Theorem \ref{thm1} }
\label{sec.proof}

In this section, we complete the proof of Theorem \ref{thm1} by showing desired estimates for $ \mathrm{R}_{r}\left(N_{1}, N_{2}\right), r=1,2,3$. We first estimate $\mathrm{R}_{1}(N_{1},N_{2})$. By Lemma \ref{lem1}, Lemma \ref{lemJn} and Lemma \ref{lemSn}, we have
\begin{equation}\label{R1}
\begin{split}
\mathrm{R}_{1}&(N_{1}, N_{2})\\
=&\iint\limits_{\mathfrak{M}} f(\alpha_{1}, N_{1})S_{3}(\alpha_{1}, U_{1})S_{3}(\alpha_{1}, V_{1})S_{3}^{2}(\alpha_{1},W_{1})
f(\alpha_{2}, N_{2})S_{3}(\alpha_{2}, U_{2})\\
&\times S_{3}(\alpha_{2}, V_{2})S_{3}^{2}(\alpha_{2}, W_{2})G^{k}(\alpha_{1}+\alpha_{2}) e(-\alpha_{1} N_{1}-\alpha_{2} N_{2})
\mathrm{d}\alpha_{1}\mathrm{d}\alpha_{2} \\
\geq& \frac{1}{3^{8}}\sum_{\substack{n_{1}\in\Xi(N_{1},k)\\n_{2}\in\Xi(N_{2},k)\\ n_{1} \equiv n_{2} \equiv 1 \pmod 2}}
\mathfrak{S}(n_{1})\mathfrak{S}(n_{2})\mathfrak{J}(n_{1})\mathfrak{J}(n_{2})\\
\geq& \frac{3.71280584\cdot3^{8}(\sqrt[3]{7}-1)^{2}(1-\eta)^{8}}{3^{8}}U_{1}U_{2}V_{1}V_{2}W_{1}^{2}W_{2}^{2}L^{k}\\
\geq& 3.09441331(1-\eta)^{8}U_{1}U_{2}V_{1}V_{2}W_{1}^{2}W_{2}^{2}L^{k}.
\end{split}
\end{equation}

Next, we turn to handle $\mathrm{R}_{2}(N_{1},N_{2})$. By \eqref{2.2} and \eqref{C(M)}, we obtain
\begin{equation*}
\begin{aligned}
\mathfrak{m} \subset &\left\{\left(\alpha_{1}, \alpha_{2}\right): \alpha_{1} \in \mathfrak{m}_{1}, \alpha_{2} \in[0,1]\right\}
 \cup\left\{\left(\alpha_{1}, \alpha_{2}\right): \alpha_{1} \in[0,1], \alpha_{2} \in \mathfrak{m}_{2}\right\}.
\end{aligned}
\end{equation*}
From the trivial bound $G(\alpha_1+\alpha_{2})\ll L$, we have
\begin{equation}\label{4.2}
\begin{aligned}
\mathrm{R}_{2}&\left(N_{1}, N_{2}\right) \\
=&\iint\limits_{\mathfrak{m} \cap \mathscr{E}_{\lambda}}f(\alpha_{1}, N_{1})S_{3}(\alpha_{1}, U_{1})S_{3}(\alpha_{1}, V_{1})S_{3}^{2}(\alpha_{1},W_{1})
f(\alpha_{2}, N_{2})S_{3}(\alpha_{2}, U_{2})\\
&\times S_{3}(\alpha_{2}, V_{2})S_{3}^{2}(\alpha_{2}, W_{2})G^{k}(\alpha_{1}+\alpha_{2}) e(-\alpha_{1} N_{1}-\alpha_{2} N_{2})
\mathrm{d}\alpha_{1}\mathrm{d}\alpha_{2} \\
\ll& L^{k}\left(\iint\limits_{\substack{\left(\alpha_{1}, \alpha_{2}\right) \in \mathfrak{m}_{1} \times[0,1] \\
\left|G\left(\alpha_{1}+\alpha_{2}\right)\right| \geq \lambda L}}+\iint\limits_{\substack{\left(\alpha_{1}, \alpha_{2}\right) \in[0,1] \times \mathfrak{m}_{2} \\ \left|G\left(\alpha_{1}+\alpha_{2}\right)\right| \geq \lambda L}}\right)|f(\alpha_{1}, N_{1})S_{3}(\alpha_{1}, U_{1})S_{3}(\alpha_{1}, V_{1})S_{3}^{2}(\alpha_{1},W_{1})\\
&\times f(\alpha_{2}, N_{2})S_{3}(\alpha_{2}, U_{2})S_{3}(\alpha_{2}, V_{2})S_{3}^{2}(\alpha_{2}, W_{2})|\mathrm{d}\alpha_{1} \mathrm{d}\alpha_{2} \\
=& L^{k}\left(\int_{0}^{1}\left|f(\alpha_{1}, N_{1})S_{3}(\alpha_{1}, U_{1})S_{3}(\alpha_{1}, V_{1})S_{3}^{2}(\alpha_{1},W_{1})J_{1}(\alpha_{1})\right|\mathrm{d}\alpha_{1}\right.\\
&\left.+\int_{0}^{1}\left|f(\alpha_{2}, N_{2})S_{3}(\alpha_{2}, U_{2})S_{3}(\alpha_{2}, V_{2})S_{3}^{2}(\alpha_{2}, W_{2})J_{2}(\alpha_{2})\right|\mathrm{d}\alpha_{2}\right),
\end{aligned}
\end{equation}
where
\begin{equation*}
  \begin{split}
     J_{1}(\alpha_{1})&=\int\limits_{\substack{\alpha_{2}\in\mathfrak{m}_{2}\\|G(\alpha_{1}+\alpha_{2})|\geq \lambda L}}
     \left|f(\alpha_{2}, N_{2})S_{3}(\alpha_{2}, U_{2})S_{3}(\alpha_{2}, V_{2})S_{3}^{2}(\alpha_{2},W_{2})\right|\mathrm{d}\alpha_{2}, \\
     J_{2}(\alpha_{2})&=\int\limits_{\substack{\alpha_{1}\in\mathfrak{m}_{1}\\|G(\alpha_{1}+\alpha_{2})|\geq \lambda L}}
     \left|f(\alpha_{1}, N_{1})S_{3}(\alpha_{1}, U_{1})S_{3}(\alpha_{1}, V_{1})S_{3}^{2}(\alpha_{1},W_{1})\right|\mathrm{d}\alpha_{1}.
  \end{split}
\end{equation*}
It is easy to get that
\begin{equation*}
\int_{0}^{1}\left|f^{2}(\alpha_{i},N_{i})\right|\mathrm{d} \alpha_{i}\ll N_{i}^{1+\epsilon}.
\end{equation*}
Then we apply Lemma \ref{lemS39}, Lemma \ref{lemS26}, H\"{o}lder's inequality, the periodicity of the function $G(\alpha)$ and get\
\begin{equation}\label{J1}
\begin{split}
  J_{1}(\alpha_{1})\ll&\left(\int_{0}^{1}|f^{2}(\alpha_{2},N_{2})|\mathrm{d}\alpha_{2}\right)^{\frac{1}{2}}
  \left(\int_{\mathfrak{m}_{2}}|S_{3}^{3}(\alpha_{2},U_{2})S_{3}^{9}(\alpha_{2},V_{2})|\mathrm{d}\alpha_{2}\right)^{\frac{1}{9}}\\
  &\times\left(\int_{0}^{1}|S_{3}^{2}(\alpha_{2},U_{2})S_{3}^{6}(\alpha_{2},W_{2})|\mathrm{d}\alpha_{2}\right)^{\frac{1}{3}}
  \left(\int\limits_{\substack{\alpha_{2}\in\mathfrak{m}_{2}\\|G(\alpha_{1}+\alpha_{2})|\geq \lambda L}}1\mathrm{d}\alpha_{2}\right)^{\frac{1}{18}}\\
  \ll& N_{2}^{\frac{409}{324}+\epsilon}\left(\int\limits_{\substack{\omega \in\left[\alpha_{2}, 1+\alpha_{2}\right] \\|G(\omega)| \geq \lambda L}} 1\mathrm{~d} \omega\right)^{\frac{1}{18}},
\end{split}
\end{equation}
where $\omega=\alpha_{1}+\alpha_{2}$.
By \eqref{SSW224}, \eqref{J1}, Lemma \ref{lemlambda} and Cauchy's inequality,  we have
\begin{equation}\label{J11}
\begin{split}
  &\int_{0}^{1}\left|f(\alpha_{1}, N_{1})S_{3}(\alpha_{1}, U_{1})S_{3}(\alpha_{1}, V_{1})S_{3}^{2}(\alpha_{1},W_{1})J_{1}(\alpha_{1})\right|\mathrm{d}\alpha_{1}\\
  \ll& N_{2}^{\frac{409}{324}+\epsilon}(\operatorname{meas}(\mathscr{E}_{\lambda}))^{\frac{1}{18}}\left(\int_{0}^{1}\left|f^{2}(\alpha_{1},N_{1})\right|\mathrm{d} \alpha_{1}\right)^{\frac{1}{2}}\\
  \times&\left(\int_{0}^{1}\left|S_{3}^{2}(\alpha_{1}, U_{1})S_{3}^{2}(\alpha_{1}, V_{1})S_{3}^{4}(\alpha_{1},W_{1})\right|\mathrm{d} \alpha_{1}\right)^{\frac{1}{2}}\\
  \ll& N_{1}^{\frac{11}{9}-10^{-12}}N_{2}^{\frac{11}{9}+\epsilon},
\end{split}
\end{equation}
where $\lambda=0.87045114$ and $ N_{1}\asymp N_{2}$ is used.
Arguing similarly we can also get
\begin{equation}\label{J22}
  \int_{0}^{1}\left|f(\alpha_{2}, N_{2})S_{3}(\alpha_{2}, U_{2})S_{3}(\alpha_{2}, V_{2})S_{3}^{2}(\alpha_{2},W_{2})J_{2}(\alpha_{2})\right|\mathrm{d}\alpha_{2}
  \ll N_{1}^{\frac{11}{9}+\epsilon}N_{2}^{\frac{11}{9}-10^{-12}}.
\end{equation}
Recalling \eqref{UVW1} and inserting  \eqref{J11} and \eqref{J22} into \eqref{4.2},  we have
\begin{equation}\label{R2}
\begin{split}
 \mathrm{R}_{2}(N_{1},N_{2})
 &\ll L^{k} \left( N_{1}^{\frac{11}{9}-10^{-12}}N_{2}^{\frac{11}{9}+\epsilon}
 +N_{1}^{\frac{11}{9}+\epsilon}N_{2}^{\frac{11}{9}-10^{-12}}\right) \\
  &\ll U_{1}U_{2}V_{1}V_{2}W_{1}^{2}W_{2}^{2}L^{k-1}.
\end{split}
\end{equation}

Finally, we handle $\mathrm{R}_{3}(N_{1},N_{2})$. By \eqref{UVW1}, Lemma \ref{lemSSS}, Cauchy's inequality and the definition of $\mathscr{E}_{\lambda}$, we get
\begin{equation}\label{R3}
\begin{aligned}
\mathrm{R}_{3}&\left (N_{1}, N_{2}\right) \\
=&\iint\limits_{\mathfrak{m} \backslash \mathscr{E}_{\lambda}}f(\alpha_{1}, N_{1})S_{3}(\alpha_{1}, U_{1})S_{3}(\alpha_{1}, V_{1})S_{3}^{2}(\alpha_{1},W_{1})
f(\alpha_{2}, N_{2})S_{3}(\alpha_{2}, U_{2})\\
&\times S_{3}(\alpha_{2}, V_{2})S_{3}^{2}(\alpha_{2}, W_{2})G^{k}(\alpha_{1}+\alpha_{2}) e(-\alpha_{1} N_{1}-\alpha_{2} N_{2})
\mathrm{d}\alpha_{1}\mathrm{d}\alpha_{2} \\
\leq&(\lambda L)^{k-2}\left(\iint\limits_{[0,1]^{2}}|f(\alpha_{1},N_{1})f(\alpha_{2},N_{2})G^{2}(\alpha_{1}+\alpha_{2})|^{2}\mathrm{d}\alpha_{1}\mathrm{d}\alpha_{2}\right)^{\frac{1}{2}}\\
&\times\left(\iint\limits_{[0,1]^{2}}|S_{3}(\alpha_{1}, U_{1})S_{3}(\alpha_{1}, V_{1})S_{3}^{2}(\alpha_{1},W_{1})S_{3}(\alpha_{2}, U_{2})S_{3}(\alpha_{2}, V_{2})S_{3}^{2}(\alpha_{2}, W_{2})|^{2}\mathrm{d}\alpha_{1}\mathrm{d}\alpha_{2}\right)^{\frac{1}{2}}\\
\leq&(\lambda L)^{k-2} \left(\iint\limits_{[0,1]^{2}}|f^{2}(\alpha_{1},N_{1})f^{2}(\alpha_{2},N_{2})G^{4}(\alpha_{1}+\alpha_{2})|\mathrm{d}\alpha_{1}\mathrm{d}\alpha_{2}\right)^{\frac{1}{2}}\\
&\times\left(\int_{0}^{1}\left|S_{3}^{2}(\alpha_{1},U_{1})S_{3}^{2}(\alpha_{1},V_{1})S_{3}^{4}(\alpha_{1},W_{1})\right|\mathrm{d}\alpha_{1}\right)^{\frac{1}{2}}\left(\int_{0}^{1}\left|S_{3}^{2}(\alpha_{2},U_{2})S_{3}^{2}(\alpha_{2},V_{2})S_{3}^{4}(\alpha_{2},W_{2})\right|\mathrm{d} \alpha_{2}\right)^{\frac{1}{2}}\\
\leq& 221.73322593 \lambda^{k-2}N_{1}^{\frac{1}{2}}N_{2}^{\frac{1}{2}}U_{1}^{-\frac{1}{2}}U_{2}^{-\frac{1}{2}}V_{1}V_{2}W_{1}^{2}W_{2}^{2} L^{k}\\
\leq& 1773.86580744 (1-\eta)^{-1}\lambda^{k-2}U_{1}U_{2}V_{1}V_{2}W_{1}^{2}W_{2}^{2} L^{k}.
\end{aligned}
\end{equation}

Inserting \eqref{R1}, \eqref{R2} and \eqref{R3} into \eqref{R} we have
\begin{equation*}
\begin{aligned}
\mathrm{R}(N_{1}, N_{2})&\geq \mathrm{R}_{1}(N_{1}, N_{2})-\mathrm{R}_{3}(N_{1}, N_{2})+O\left(U_{1}U_{2}V_{1}V_{2}W_{1}^{2}W_{2}^{2}L^{k-1}\right) \\
&>\left(3.09441331(1-\eta)^{8}-1773.86580744 (1-\eta)^{-1}\lambda^{k-2}\right)U_{1}U_{2}V_{1}V_{2}W_{1}^{2}W_{2}^{2}L^{k},
\end{aligned}
\end{equation*}
where $\lambda=0.87045114$.
Then we can deduce $k\geq 48$ from
 $$\mathrm{R}\left(N_{1}, N_{2}\right)>0.$$
Thus we complete the proof of Theorem \ref{thm1}.

\section*{Acknowledgment}
The authors would like to thank the referees for their many useful comments.
This work is supported by National Natural Science Foundation of China (Grant No. 12171286).

\end{document}